\newtheorem{thm}{Theorem}
\newtheorem{fact}[thm]{Fact}
\newtheorem{lemma}[thm]{Lemma}
\newtheorem{claim}{Claim}
\numberwithin{claim}{thm}
\theoremstyle{definition}
\newtheorem{dfn}[thm]{Definition}
\newcommand{\bp}{\mathbf{p}}
\newcommand{\bpr}{\bar{\mathbf{p}}}
\newcommand{\bq}{\mathbf{q}}
\newcommand{\cA}{\mathcal{A}}
\newcommand{\cC}{\mathcal{C}}
\newcommand{\cD}{\mathcal{D}}
\newcommand{\Fq}{\mathbb{F}_q}
\newcommand{\cJ}{\mathcal{J}}
\newcommand{\cP}{\mathcal{P}}
\newcommand{\Int}{\mathbb{Z}}
\newcommand{\Nat}{\mathbb{N}}
\newcommand{\Reals}{\mathbb{R}}
\newcommand{\cX}{\mathcal{X}}
\newcommand{\Ex}{\mathbb{E}}
\newcommand{\mum}{\mu_-}
\newcommand{\mup}{\mu_+}
\renewcommand{\le}{\leqslant}
\renewcommand{\ge}{\geqslant}
\newcommand{\pl}{\preceq}
\newcommand{\pg}{\succeq}
\title{Subsets of posets minimising the number of chains}
\author{Wojciech Samotij}
\address{School of Mathematical Sciences, Tel Aviv University, Tel Aviv 6997801, Israel}
\email{samotij@post.tau.ac.il}
\thanks{Research supported in part by the Israel Science Foundation grant 1147/14.}
\begin{document}

\maketitle

\begin{abstract}
  A well-known theorem of Sperner describes the largest collections of subsets of an $n$-element set none of which contains another set from the collection. Generalising this result, Erd\H{o}s characterised the largest families of subsets of an $n$-element set that do not contain a chain of sets $A_1 \subset \dotsc \subset A_k$ of an arbitrary length $k$. The extremal families contain all subsets whose cardinalities belong to an interval of length $k-1$ centred at $n/2$. In a far-reaching extension of Sperner's theorem, Kleitman determined the smallest number of chains of length two that have to appear in a collection of a given number $a$ of subsets of an $n$-element set. For every $a$, this minimum is achieved by the collection comprising $a$ sets whose cardinalities are as close to $n/2+1/4$ as possible. We show that the same is true about chains of an arbitrary length $k$, for all $a$ and $n$, confirming the prediction Kleitman made fifty years ago. We also characterise all families of $a$ subsets with the smallest number of chains of length $k$ for all $a$ for which this smallest number is positive. Our argument is inspired by an elegant probabilistic lemma from a recent paper of Noel, Scott, and Sudakov, which in turn can be traced back to Lubell's proof of Sperner's theorem.
\end{abstract}

\section{Introduction}
\label{sec:introduction}

The classical theorem of Sperner~\cite{Sp28} describes the largest families of subsets of a finite set none of which contains another set from the family. Originally a result in extremal set theory, it reached a broader mathematical audience as the key lemma in Erd\H{o}s' beautiful solution~\cite{Er45} to the Littlewood--Offord problem~\cite{LiOf43}. In the same paper~\cite{Er45}, generalising the theorem of Sperner, Erd\H{o}s described the largest families of sets not containing a \emph{chain} of sets $A_1 \subset \dotsc \subset A_k$ of an arbitrary length $k$. Among families of subsets of a set of cardinality $n$, the largest such families are those containing all subsets whose sizes belong to an interval of length $k-1$ centred at $n/2$. In particular, the largest size of a family of subsets of an $n$-element set that does not contain a \emph{$k$-chain} (chain of length $k$) is equal to the sum of the $k-1$ largest binomial coefficients.

It is natural to ask how many $k$-chains must appear in a family of subsets of an $n$-element set that is larger than Erd\H{o}s' bound. This problem was first considered by Erd\H{o}s and Katona, who conjectured that a family with $\binom{n}{\lfloor n/2 \rfloor} + t$ sets must contain at least $t \cdot \lceil \frac{n+1}{2} \rceil$ chains of length $2$. This conjecture was confirmed by Kleitman~\cite{Kl68}, who in fact determined the smallest number of $2$-chains in a family of any given size. For every $a$ with $0 \le a \le 2^n$, one such extremal family consists of $a$ sets whose cardinalities are as close to $n/2$ as possible. Kleitman went on to conjecture that the same families minimise the number of $k$-chains for every $k$.

Made almost fifty years ago, Kleitman's conjecture had laid dormant until his result was rediscovered several years ago by Das, Gan, and Sudakov~\cite{DaGaSu15} and, independently, by Dove, Griggs, Kang, and Sereni~\cite{DoGrKaSe14}. Apart from confirming the conjecture for every $k$ and all $a$ belonging to a small range above the sum of the $k-1$ largest binomial coefficients, the two papers~\cite{DaGaSu15, DoGrKaSe14} aroused renewed interest in the problem, as witnessed by~\cite{BaPeWa, BaWa, NoScSu}. In particular, Balogh and Wagner~\cite{BaWa} proved the conjecture for all $k$ and $a \le (1-\varepsilon)2^n$, provided that $n$ is sufficiently large with respect to $k$ and $\varepsilon$.

The main result of this paper is a resolution of Kleitman's conjecture for all $k$ and $n$. Additionally, for each $a$ larger than the sum of the $k-1$ largest binomial coefficients, we characterise all families of $a$ sets that contain the smallest number of $k$-chains among all families of this size. In the case $k = 2$, such characterisation was obtained by Das, Gan, and Sudakov~\cite{DaGaSu15}. In order to state our result formally, we need one definition. We shall say that a family $\cA$ of subsets of an $n$-element set $\Omega$ is \emph{centred} if it satisfies the following:
\begin{enumerate}[label={(\roman*)}]
\item
  \label{item:centred-1}
  If $A \in \cA$ and $B \subseteq \Omega$ satisfy $\big| |B| - n/2 \big| < \big| |A| - n/2 \big|$, then $B \in \cA$.
\item
  \label{item:centred-2}
  If for some $i < n/2$, the family $\cA$ contains some but not all subsets of $\Omega$ of size $i$ and some but not all subsets of $\Omega$ of size $n-i$, then either there are no $A, B \in \cA$ with $|A| = i$ and $|B| = n-i$ such that $A \subset B$ or there are no $A, B \not\in \cA$ with $|A| = i$ and $|B| = n-i$ such that $A \subset B$.
\end{enumerate}
We now state the main result of this paper. The case $k=2$ of the theorem, which is a strengthening of Kleitman's result because it characterises all extremal families, was already proved by Das, Gan, and Sudakov~\cite{DaGaSu15}.

\begin{thm}
  \label{thm:Kleitman}
  For all positive integers $a$, $k$, and $n$, every centred family of $a$ subsets of an $n$-element set contains the smallest number of chains of length $k$ among all families of this size. Moreover, if $k \ge 2$ and a centred family of $a$ subsets contains at least one chain of length $k$, then every non-centred family of this size contains strictly more such chains.
\end{thm}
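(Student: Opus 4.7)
My plan is to prove Theorem~\ref{thm:Kleitman} via a compression argument, with each step justified by a probabilistic identity for counting $k$-chains in the spirit of Lubell's permutation method and the techniques of~\cite{NoScSu}. The identity is: letting $\sigma$ denote a uniformly random maximal chain $\emptyset = C_0 \subset C_1 \subset \dots \subset C_n = \Omega$ (obtained from a uniformly random permutation of $\Omega$),
\[
\#\{k\text{-chains in } \cA\} \;=\; \sum_{0 \le s_1 < \dots < s_k \le n} \frac{n!}{s_1!\,(s_2-s_1)!\cdots(n-s_k)!} \cdot \Pr\bigl[C_{s_1}, \dots, C_{s_k} \in \cA\bigr].
\]
The multinomial coefficient is the number of $k$-chains in $2^\Omega$ with sizes $(s_1, \dots, s_k)$. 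This identity recasts the chain count as a weighted sum of joint containment probabilities, which is more tractable than the raw count under local modifications of $\cA$.

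The compression proceeds in two stages. In the first stage, I would show that if $\cA$ violates condition~\ref{item:centred-1}---there exist $A \in \cA$ and $B \notin \cA$ with $\bigl||B| - n/2\bigr| < \bigl||A| - n/2\bigr|$---then the modified family $(\cA \setminus \{A\}) \cup \{B\}$ has no more $k$-chains than $\cA$. Iterating reduces to a family whose level-size profile is centred: full levels in a symmetric interval around $n/2$, with at most two partial levels at $i$ and $n-i$. In the second stage, with the centred level profile fixed, the $k$-chain count decomposes as (chains not involving both partial levels) plus a term proportional to the number of inclusion pairs $(A,B)$ with $A \in \cA$, $|A| = i$, $B \in \cA$, $|B| = n-i$, and $A \subset B$. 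The first summand depends only on the level sizes, while the second is bounded below by $\max\bigl(0,\; (a_i + a_{n-i} - \binom{n}{i})\binom{n-i}{i}\bigr)$, where $a_j = |\{A \in \cA : |A| = j\}|$. Equality holds precisely when either no in-in pair exists (option~1 of~\ref{item:centred-2}) or no out-out pair exists (option~2), and a direct swap argument between the two partial levels reaches one of these extremes without changing any level size.

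The primary technical obstacle is the first-stage compression. The swap $A \to B$ destroys some chains of $\cA$ through $A$ but also creates new chains through $B$, and one must show that the net change is non-positive. I expect this to follow from the probabilistic identity above by a term-by-term comparison: for each size-tuple $\vec{s}$, the change in $\Pr[C_{s_1}, \dots, C_{s_k} \in \cA]$ under the swap has a sign controllable by whether some $s_j$ equals $|A|$ or $|B|$, and the weighted sum becomes non-positive because the multinomial weights favor size-tuples near the middle. For the ``moreover'' part, each compression step is strict unless the current family is already centred, provided its $k$-chain count is positive. Since any non-centred family requires at least one non-trivial compression to become centred, and this compression strictly decreases the $k$-chain count when that count is positive, every non-centred family strictly exceeds the centred minimum whenever the latter is positive.
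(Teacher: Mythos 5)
The core of your argument --- the first-stage compression --- is exactly the step that is not proved, and it is the step that carries essentially all of the difficulty of the theorem. As literally stated, the claim is false: it is not true that for \emph{every} $A \in \cA$ and $B \notin \cA$ with $\bigl||B|-n/2\bigr| < \bigl||A|-n/2\bigr|$ the swap $(\cA\setminus\{A\})\cup\{B\}$ has no more $k$-chains. For instance, with $n=6$, $k=2$, take $\cA$ to be all $3$-sets containing the element $1$ together with $A = \{2,3,4,5,6\}$; then $c_2(\cA)=0$, but replacing $A$ by $B=\{1,2\}$ creates four $2$-chains. So at best you may hope that some \emph{well-chosen} $B$ works, and proving that such a $B$ always exists (and controlling when the step is strict, which your ``moreover'' argument also needs) is precisely the obstacle that kept the conjecture open: the papers of Das--Gan--Sudakov and Dove--Griggs--Kang--Sereni carry out arguments of this local-exchange flavour only for $a$ in a narrow range above the Erd\H{o}s threshold. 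The mechanism you propose for the general step does not supply a proof: your displayed identity is, after multiplying out, just the tautology $c_k(\cA)=\sum_{\vec{s}}\#\{k\text{-chains in }\cA\text{ with size profile }\vec{s}\}$, and the change in $\Pr[C_{s_1},\dotsc,C_{s_k}\in\cA]$ under a swap is \emph{not} controlled by which $s_j$ equals $|A|$ or $|B|$ --- it depends on how $A$ and $B$ sit relative to the rest of $\cA$, so there is no term-by-term domination and the size of the multinomial weights is beside the point. (Your second stage, by contrast, is essentially sound: once all levels other than $i$ and $n-i$ are full, the count does reduce to level sizes plus the number of comparable pairs between the two partial levels, which is how condition~(ii) of centredness enters.)

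It is worth noting that the paper avoids compressing $\cA$ altogether. It proves the inequality only for unions of \emph{full} rank levels (there a compression is performed, but on the vector of per-level inclusion probabilities, where the expected chain count is an explicit multilinear polynomial and the exchange can be analysed), establishes convexity of the extremal function $m_k$, and then transfers the full-level statement to arbitrary $\cA$ by intersecting with a uniformly random maximal chain $C$: since $\cA\cap C$ meets each level at most once, the full-level lemma applies to its set of ranks, and the identities $\Pr(A\in C)\binom{n}{|A|}=1$ together with a discrete Jensen inequality recover $c_k(\cA)\ge m_k(|\cA|)$. If you want to salvage a direct compression on families, you would need to prove the existence of a good target $B$ for every non-centred $\cA$, together with a strictness statement --- and that is a theorem, not a routine verification.
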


In fact, the argument we use to prove Theorem~\ref{thm:Kleitman} is sufficiently flexible to establish analogous statements in a wider class of partially ordered sets that includes the boolean lattice (the family of all subsets of a given finite set). We shall defer the precise technical statement, Theorem~\ref{thm:main}, to Section~\ref{sec:defin-prel} and state here only one additional corollary of it in the setting of finite vector spaces. Generalising the definition given above, we shall say that a family $\cA$ of subspaces of an $n$-dimensional vector space $\Omega$ is centred if $\cA$ satisfies~\ref{item:centred-1} and~\ref{item:centred-2} above with $\subseteq$ ($\subset$) denoting `is a (proper) subspace of' and $|A|$ denoting the dimension of $A$.

\begin{thm}
  \label{thm:vector-spaces}
  For all positive integers $a$ and $k$, every centred family of $a$ subspaces of a finite vector space contains the smallest number of chains of length $k$ among all families of this size. Moreover, if $k \ge 2$ and a centred family of $a$ subspaces contains at least one chain of length $k$, then every non-centred family of this size contains strictly more such chains.
\end{thm}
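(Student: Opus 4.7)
The plan is to verify that the subspace lattice of an $n$-dimensional vector space over a finite field $\Fq$ falls within the abstract framework of Theorem~\ref{thm:main}, so that Theorem~\ref{thm:vector-spaces} follows by direct invocation of that general result. Concretely, I would identify the structural features of this lattice that parallel those of the boolean lattice. First, the lattice is graded by dimension, with $n+1$ levels whose sizes are the Gaussian binomial coefficients $\binom{n}{k}_q$ for $0 \le k \le n$; these level sizes are symmetric under $k \mapsto n-k$ and log-concave, and hence unimodal with maximum at the middle. Second, the general linear group $GL_n(\Fq)$ acts transitively on each level and, more generally, on the set of pairs $(U,V)$ of subspaces with $U \subset V$ and prescribed dimensions. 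Third, the cover relations are regular: every $k$-dimensional subspace contains exactly $(q^k-1)/(q-1)$ subspaces of dimension $k-1$ and is contained in exactly $(q^{n-k+1}-1)/(q-1)$ subspaces of dimension $k+1$. It follows that the number of chains between any two fixed levels passing through a given subspace depends only on that subspace's dimension.

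My next step would be to isolate the precise list of axioms on a poset used to prove Theorem~\ref{thm:main}. Based on the discussion in the introduction, these axioms presumably require that the poset is graded with a symmetric, unimodal level-size sequence; that the number of chains between two fixed levels passing through a given element depends only on its rank, so that an LYM-type inequality controls the distribution of chains across levels; and that the poset admits enough automorphisms or compressions to implement the centring operations embedded in properties~\ref{item:centred-1} and~\ref{item:centred-2}. Each of these is satisfied by the subspace lattice in view of the observations above, and Theorem~\ref{thm:vector-spaces} then drops out.

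The main obstacle is therefore entirely encapsulated in the formulation of Theorem~\ref{thm:main}: one must identify axioms strong enough to drive the Noel--Scott--Sudakov probabilistic argument and the associated centring procedures, yet weak enough to be satisfied simultaneously by the boolean lattice and the subspace lattice. The subtlest point is likely the centring operation corresponding to condition~\ref{item:centred-2}, which concerns the interaction between complementary levels $i$ and $n-i$ and so uses more than the local cover relations. In the boolean setting this is typically handled by an exchange argument relying on complementation; in the subspace setting, one would use a $GL_n(\Fq)$-equivariant analogue, replacing complementation with the map $U \mapsto U^{\perp}$ with respect to a fixed non-degenerate bilinear form. Once such a centring mechanism is built into the hypotheses of Theorem~\ref{thm:main}, deducing Theorem~\ref{thm:vector-spaces} is just a matter of checking that the subspace lattice is an admissible example, the characterisation of extremal families being inherited from the abstract statement.
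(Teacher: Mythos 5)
Your overall strategy coincides with the paper's: Theorem~\ref{thm:vector-spaces} is deduced by checking that the lattice of subspaces of $\Fq^n$ satisfies the hypotheses of Theorem~\ref{thm:main}, and two of the three hypotheses are handled essentially as in the paper --- homogeneity via the transitive action of the general linear group (note that what is needed is transitivity on \emph{complete flags}, equivalently on ordered bases, not merely on nested pairs of prescribed dimensions), and symmetry via $U \mapsto U^{\perp}$.

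The gap is that you never identify, let alone verify, the one genuinely quantitative hypothesis of Theorem~\ref{thm:main}: that the poset is \emph{(strictly) descending}, i.e.\ that $c_2'(i,j) < c_2'(i-1,j-1)$ for all $0 < i < j \le n$, where $c_2'(i,j)$ denotes the number of rank-$j$ elements above a fixed rank-$i$ element. The axiom list you guess at (graded, symmetric and unimodal level sizes, rank-determined chain counts, enough automorphisms) would not suffice to drive the argument: the descending condition is precisely what makes the compression operator in the proof of Lemma~\ref{lemma:main} decrease the number of $k$-chains, and unimodality of the Gaussian binomial coefficients is a \emph{consequence} of it (Fact~\ref{fact:descending-unimodal}), not a substitute for it. For the subspace lattice the check is short but must be done: $c_2'(i,j)$ equals the number of $(j-i)$-dimensional subspaces of $\Fq^{n-i}$, namely $\prod_{\ell=0}^{j-i-1}(q^{n-i}-q^{\ell})/(q^{j-i}-q^{\ell})$, which strictly increases when $i$ is replaced by $i-1$ with $j-i$ held fixed; strictness here is exactly what feeds the characterisation of extremal families in the second assertion. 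Finally, your suggestion that condition~\ref{item:centred-2} requires a bespoke $\perp$-based exchange argument at the deduction stage is misplaced: the orthogonal complement enters only to certify that the poset is symmetric, and all handling of~\ref{item:centred-2} takes place inside the proof of Theorem~\ref{thm:main} itself.
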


The posets of subspaces of finite vector spaces were first studied in this context by Noel, Scott, and Sudakov~\cite{NoScSu}. The first assertion of Theorem~\ref{thm:vector-spaces} with $k=2$ was proved earlier by Balogh, Pet\v{r}\'{\i}\v{c}kov\'a, and Wagner~\cite{BaPeWa}.

The remainder of this paper is organised as follows. In order to highlight the main ideas to the reader, we start with a detailed sketch of our proof of Kleitman's conjecture, the first assertion of Theorem~\ref{thm:Kleitman}, which we give in Section~\ref{sec:proof-sketch}. The sketch is a completely rigorous argument, but it uses two important technical lemmas that we prove only later in the paper, in a more general form. Our method is based on and inspired by an elegant probabilistic lemma due to Noel, Scott, and Sudakov~\cite[Lemma~2.2]{NoScSu}, which can be traced back to Lubell's proof~\cite{Lu66} of Sperner's theorem. Section~\ref{sec:defin-prel} reviews basic notions related to partially ordered sets in preparation for the statement of Theorem~\ref{thm:main}. It is concluded with a short derivation of both Theorems~\ref{thm:Kleitman} and~\ref{thm:vector-spaces} from Theorem~\ref{thm:main} -- we simply verify that the boolean lattice and the family of subspaces of a finite vector space (ordered by inclusion) belong to the class of partially ordered sets covered by Theorem~\ref{thm:main}. In Section~\ref{sec:key-lemmas-proof}, we state the aforementioned two important technical lemmas, as well as a discrete analogue of Jensen's inequality, and use them to derive Theorem~\ref{thm:main}. Finally, we prove the two lemmas in Section~\ref{sec:proofs}, thus completing the proofs of Theorems~\ref{thm:Kleitman}, \ref{thm:vector-spaces}, and~\ref{thm:main}.

\section{A sketch of the proof of Kleitman's conjecture}
\label{sec:proof-sketch}

Let $P$ be the family of all subset of $\{1, \dotsc, n\}$ ordered by inclusion. List all sets in $P$ as $A_1, \dotsc, A_{2^n}$ ordered by the difference of their cardinality from the number $n/2-1/4$. For example, if $n$ is even, then the above sequence first lists all sets of size $n/2$, then all sets of size $n/2-1$, then all sets of size $n/2+1$, etc. For $0 \le a \le 2^n$, let $\cA_a = \{A_1, \dotsc, A_a\}$ and note that $\cA_a$ is centred. Kleitman conjectured that for each $a$, $k$, and $n$, the family $\cA_a$ has the smallest number of $k$-chains among all $a$-element subsets of $P$. More formally, letting $c_k(\cA)$ denote the number of $k$-chains formed by the sets in $\cA \subseteq P$ and $m_k(a) = c_k(\cA_a)$, the conjecture states that $c_k(\cA) \ge m_k(|\cA|)$ for every $\cA \subseteq P$.

We first observe that the function $a \mapsto m_k(a)$ is convex. Indeed, its discrete derivative, the function $a \mapsto m_k(a) - m_k(a-1)$, is nondecreasing. (It takes $n+2-k$ different values.) The much less obvious property of $m_k$ that our proof requires is that
\begin{equation}
  \label{eq:crucial-ppty}
  m_k\left( \sum_{i \in I} \binom{n}{i} \right) \le c_k\left( \bigcup_{i \in I} \binom{[n]}{i} \right) \text{ for all $I \subseteq \{0, \dotsc, n\}$}.
\end{equation}
In other words, \eqref{eq:crucial-ppty} states that the union of any collection of rank levels of $P$ contains at least as many $k$-chains as the centred family $\cA_a$ of the same cardinality. In other words, inequality~\eqref{eq:crucial-ppty} states that Kleitman's conjecture holds for each $\cA$ that is a union of full rank levels of $P$. (That is, for $\cA$ that contain either all of or none of the sets of every given cardinality.) In order to prove~\eqref{eq:crucial-ppty}, we will generalise it by replacing $\bigcup_{i \in I} \binom{[n]}{i}$ with a random subfamily of $P$ that contains each set with probability depending only on its cardinality, independently of all sets of different cardinalities, and replacing $\sum_{i \in I} \binom{n}{i}$ with the expected size of this random family. Our proof of this generalisation is elementary, but not particularly exciting, and thus we defer it to the main body of the paper. Instead, we just show how these two properties of $m_k$, convexity and~\eqref{eq:crucial-ppty}, imply Kleitman's conjecture.

Define the following integer-valued function $f$ on subsets of $P$. For every $\cX \subseteq P$,
\[
  f(\cX) = m_k\left( \sum_{A \in \cX} \binom{n}{|A|} \right) - c_k\left( \bigcup_{A \in \cX} \binom{[n]}{|A|}\right).
\]
Observe that~\eqref{eq:crucial-ppty} is equivalent to $f(\cC)$ being nonpositive for every chain $\cC$ of $P$. Now, fix an arbitrary $\cA \subseteq P$ and let $\cC$ be a uniformly selected random chain of (maximum) length $n+1$. Since $\cA \cap \cC$ is also a chain, we have $f(\cA \cap \cC) \le 0$. On the other hand, as for each $I \subseteq \{0, \dotsc, n\}$,
\[
  c_k\left( \bigcup_{i \in I} \binom{[n]}{i} \right) = \sum_{\substack{i_1, \dotsc, i_k \in I \\ i_1 < \dotsc < i_k}} c_k\left( \bigcup_{j = 1}^k \binom{[n]}{i_j} \right),
\]
then
\[
  0 \ge \Ex[f(\cA \cap \cC)] = \Ex\left[m_k\left(\sum_{A \in \cA \cap \cC} \binom{n}{|A|} \right)\right] - \Ex\left[ \sum_{\substack{A_1, \dotsc, A_k \in \cA \cap \cC \\ A_1 \subset \dotsc \subset A_k}} c_k\left(\bigcup_{j=1}^k \binom{[n]}{|A_j|} \right) \right].
\]
The random chain $\cC$ contains exactly one set with cardinality $i$ for each $i \in \{0, \dotsc, n\}$. By symmetry, each such set is equally likely to appear in $\cC$. It follows that $\Pr(A \in \cC) \cdot \binom{n}{|A|} = 1$ for each $A \in P$. Since $m_k$ is convex, then Jensen's inequality gives
\[
  \Ex\left[m_k\left(\sum_{A \in \cA \cap \cC} \binom{n}{|A|} \right)\right] \ge m_k\left(   \Ex\left[\sum_{A \in \cA \cap \cC} \binom{n}{|A|}\right]  \right) = m_k\left( \sum_{A \in \cA} \Pr(A \in \cC) \cdot \binom{n}{|A|}\right) = m_k(|\cA|),
\]
Generalising the above, if $0 \le i_1 < \dotsc < i_k \le n$, then the random chain $\cC$ contains exactly one $k$-chain of sets with cardinalities $i_1, \dotsc, i_k$. Moreover, by symmetry, each such chain is equally likely to appear in $\cC$. It follows that $\Pr(\{A_1, \dotsc, A_k\} \subseteq \cC) \cdot c_k\left(\bigcup_{j=1}^k \binom{[n]}{|A_j|} \right) = 1$ whenever $A_1 \subset \dotsc \subset A_k$. Consequently,
\[
  \Ex\left[ \sum_{\substack{A_1, \dotsc, A_k \in \cA \cap \cC \\ A_1 \subset \dotsc \subset A_k}} c_k\left(\bigcup_{j=1}^k \binom{[n]}{|A_j|} \right) \right] = \sum_{\substack{A_1, \dotsc, A_k \in A \\ A_1 \subset \dotsc \subset A_k}} \Pr(\{A_1, \dotsc, A_k\} \subseteq \cC) \cdot c_k\left(\bigcup_{j=1}^k \binom{[n]}{|A_j|} \right) = c_k(\cA).
\]
We conclude that $c_k(\cA) \ge m_k(|\cA|)$.

\section{Definitions and the main result}
\label{sec:defin-prel}

\subsection{Posets}

Let $P$ be a finite poset, that is, a finite set equipped with a partial order $\pl$. A set $L$ of elements of $P$ is called a \emph{chain} if the elements of $L$ are pairwise comparable. The \emph{length} of a chain is the number of its elements. For the sake of brevity, we shall refer to chains of length $k$ as \emph{$k$-chains}. The \emph{height} of $P$, which we shall denote by $h(P)$, is the largest length of a chain in $P$. We shall say that $P$ is \emph{homogeneous} if for every two maximal chains $L$ and $L'$ in $P$, there exists an automorphism of $P$ that maps $L$ to $L'$. (In particular, all maximal chains in a homogenous poset have the same length, and therefore every homogeneous finite poset is \emph{graded}.) We shall say that $P$ is \emph{symmetric} if it is isomorphic to the reverse of $P$, that is, if $(P, \pl)$ is order-isomorphic to $(P, \pg)$.

The \emph{ranking function} $r \colon P \to \Nat$ maps each $x \in P$ to the length of the longest chain in $P$ whose all elements are strictly smaller than $x$. This way, the elements of $P$ with zero rank are precisely the minimal elements of $P$ and the largest rank of an element of $P$ is $h(P) - 1$. It will be convenient to denote by $P_i$ the set of all elements of $P$ with rank $i$, which we refer to as the $i$th \emph{rank level} (or simply \emph{rank}) of $P$, that is,
\[
  P_i = r^{-1}(i) = \{x \in P \colon r(x) = i\}.
\]
The following definition generalises the two notions of centred families defined in the introduction.

\begin{dfn}
  \label{dfn:centred}
  A set $A$ of elements of a finite poset $P$ of height $n+1$ with ranking function $r \colon P \to \Nat$ is \emph{centred} if it satisfies the following:
  \begin{enumerate}[label={(\roman*)}]
  \item
    \label{item:centred-1}
    If $x \in A$ and $y \in P$ satisfy $|r(y)  - n/2| < |r(x) - n/2|$, then $y \in A$.
  \item
    \label{item:centred-2}
    If for some $i < n/2$, the set $A$ contains some but not all elements of $P_i$ and some but not all elements of $P_{n-i}$, then either there are no $x, y \in A$ with $r(x) = i$ and $r(y) = n-i$ such that $x \prec y$ or there are no $x, y \not\in A$ with $r(x) = i$ $x \prec y$ for all such $x, y \in A$.
  \end{enumerate}
\end{dfn}

For a positive integer $k$ and a set $A$ of elements of $P$, denote by $C_k(A)$ the set of all chains of length $k$ and let $c_k(A)$ denote the cardinality of this set. For a chain $L \subseteq P$ of length at most $k$, and a set $J \subseteq \Nat$, let $C_k'(L, J)$ be the set of all $k$-chains in $P$ that contain $L$ and whose remaining elements have ranks belonging to the set $J$, that is,
\[
  C_k'(L, J) = \{M \in C_k(P) \colon L \subseteq M \text{ and } r(M \setminus L) \subseteq J\}.
\]
Moreover, denote by $c_k'(L, J)$ the cardinality of the above set.

\begin{fact}
  \label{fact:ckLI-hom}
  Let $k$ be a positive integer, let $P$ be a homogeneous poset with ranking function $r \colon P \to \Nat$, and let $J \subseteq \Nat$. If chains $L_1$ and $L_2$ satisfy $r(L_1) = r(L_2)$, then
  \[
    c_k'(L_1, J) = c_k'(L_2, J).
  \]
\end{fact}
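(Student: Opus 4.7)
The plan is to use homogeneity directly: I will exhibit an automorphism of $P$ that sends $L_1$ onto $L_2$, and this automorphism will induce a bijection between $C_k'(L_1, J)$ and $C_k'(L_2, J)$.

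First I would extend $L_1$ to a maximal chain $M_1$ and $L_2$ to a maximal chain $M_2$ (this is possible by finiteness of $P$, using that any chain can be enlarged until maximality is reached). Since $P$ is homogeneous, there exists an automorphism $\varphi$ of $P$ with $\varphi(M_1) = M_2$. The crucial observation is that any automorphism of a poset preserves the ranking function: $r$ was defined purely in terms of strict comparability and chain lengths, both of which are invariant under order-isomorphisms. Consequently, for each $i$ the automorphism $\varphi$ sends the unique element of $M_1$ of rank $i$ to the unique element of $M_2$ of rank $i$. Because $r(L_1) = r(L_2)$ by assumption, it follows that $\varphi(L_1) = L_2$.

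Next I would check that $\varphi$ maps $C_k'(L_1, J)$ into $C_k'(L_2, J)$. If $M \in C_k'(L_1, J)$, then $M$ is a $k$-chain containing $L_1$ with $r(M \setminus L_1) \subseteq J$; then $\varphi(M)$ is a $k$-chain (as $\varphi$ is an order-isomorphism), contains $\varphi(L_1) = L_2$, and satisfies $r(\varphi(M) \setminus L_2) = r(\varphi(M \setminus L_1)) = r(M \setminus L_1) \subseteq J$, again by rank-preservation. The same argument applied to $\varphi^{-1}$ shows that $\varphi$ restricts to a bijection between the two sets, so $c_k'(L_1, J) = c_k'(L_2, J)$.

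There is really no hard step here; the only point one has to be a little careful about is justifying that automorphisms preserve ranks and that the homogeneity condition (which is phrased in terms of maximal chains) can be transferred to arbitrary chains of equal rank set by the extension-to-maximal-chain trick. Once those are in hand, the conclusion is immediate.
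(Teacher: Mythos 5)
Your argument is correct and is essentially identical to the paper's proof: both extend $L_1$ and $L_2$ to maximal chains, invoke homogeneity to obtain an automorphism carrying one to the other, use rank-preservation of automorphisms together with $r(L_1)=r(L_2)$ to conclude $\varphi(L_1)=L_2$, and then observe that $\varphi$ restricts to a bijection between $C_k'(L_1,J)$ and $C_k'(L_2,J)$. Your write-up simply spells out the details the paper leaves implicit.
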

\begin{proof}
  Since $P$ is homogeneous and $r(L_1) = r(L_2)$, there is an automorphism $\varphi$ of $P$ that maps $L_1$ to $L_2$. To see this, consider arbitrary maximal chains $M_1 \supseteq L_1$ and $M_2 \supseteq L_2$ and observe that every automorphism of $P$ must preserve the rank. It is easy to see that $\varphi$ is a bijection between $C_k'(L_1, J)$ and $C_k'(L_2, J)$.
\end{proof}

Since in homogeneous posets, for every $k$ and every $J \subseteq \Nat$, the function $L \mapsto c_k'(L, I)$ depends only on $r(L)$, abusing the notation slightly, given an $I \subseteq \Nat$, we shall write $c_k'(I, J)$ to denote $c_k'(L, J)$ for some (every) chain $L$ with $r(L) = I$. (For the sake of brevity, we shall from now on often write $x$ in place of $\{x\}$ to denote the one-element set containing $x$.)

\begin{dfn}
  A homogenous poset $P$ with height $n+1$ is \emph{descending} if for every $0 < i < j \le n$,
  \[
    c_2'(i, j) \le c_2'(i-1, j-1).
  \]
  We shall say that $P$ is \emph{strictly descending} if the above inequality is strict for all such $i$ and $j$.
\end{dfn}

\subsection{The main result}
\label{sec:main-result}

With all definitions in place, we are finally ready to state the main technical result of this paper.

\begin{thm}
  \label{thm:main}
  Suppose that $P$ is a descending, homogeneous, and symmetric finite poset. For all positive integers $a$ and $k$, every centred set of $a$ elements of $P$ has the smallest number of $k$-chains among subsets of $P$ of this size. Moreover, if $k \ge 2$, $P$ is strictly descending, and a centred family of $a$ subsets contains at least one $k$-chain, then every non-centred family of this size contains strictly more $k$-chains.
\end{thm}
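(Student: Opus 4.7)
The plan is to promote the Boolean-lattice argument sketched in Section~\ref{sec:proof-sketch} to a general descending, homogeneous, symmetric finite poset $P$ of height $n+1$. First I produce a reference centred family: list the elements of $P$ as $x_1, x_2, \dotsc$ in order of $|r(\cdot) - n/2|$, breaking ties toward lower rank (symmetry of $P$ guarantees that the two possible tie-breaks yield isomorphic families); then $\cA_a := \{x_1, \dotsc, x_a\}$ is centred for every $a$, and I set $m_k(a) := c_k(\cA_a)$. The theorem then reduces to showing (i) $c_k(\cA) \ge m_k(|\cA|)$ for every $\cA \subseteq P$, and (ii) strict inequality when $\cA$ is not centred, $P$ is strictly descending, and $m_k(|\cA|) > 0$.

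\textbf{Two auxiliary facts about $m_k$.} First, I need that $a \mapsto m_k(a)$ is convex. By homogeneity and Fact~\ref{fact:ckLI-hom}, the increment $m_k(a) - m_k(a-1)$ equals $c_k'(r(x_a), I_{a-1})$, where $I_{a-1} \subseteq \{0, \dotsc, n\}$ records the ranks already represented in $\cA_{a-1}$; the descending hypothesis, together with the ``closer to $n/2$ first'' ordering of the $x_i$, makes these increments non-decreasing in $a$, which is exactly discrete convexity. Second, and more substantially, I need the generalised crucial inequality
\[
  m_k\!\left(\sum_{i \in I} |P_i|\right) \le c_k\!\left(\bigcup_{i \in I} P_i\right) \quad \text{for every } I \subseteq \{0, \dotsc, n\},
\]
ideally in the stronger probabilistic form in which $I$ is replaced by the random set obtained by independently keeping each rank level with an arbitrary probability $p_i$, the right-hand side becoming an expectation and the left $m_k$ applied to the expected total size. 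This is the ``technical lemma'' deferred by the sketch; I expect to prove it by induction, using the symmetry and descending hypotheses to execute a level-swap argument that ``centres'' $I$ around $n/2$.

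\textbf{Finishing via a random maximum chain.} Once the two ingredients are in hand, define
\[
  f(\cX) = m_k\!\left(\sum_{x \in \cX} |P_{r(x)}|\right) - c_k\!\left(\bigcup_{x \in \cX} P_{r(x)}\right)
\]
for $\cX \subseteq P$; the crucial inequality says $f(\cC) \le 0$ whenever $\cC$ is a chain. Fix $\cA \subseteq P$ and let $\cC$ be a uniformly random maximum chain of $P$. Since $\cA \cap \cC$ is a chain, $\Ex[f(\cA \cap \cC)] \le 0$. Homogeneity makes $\Pr(x \in \cC) \cdot |P_{r(x)}| = 1$ for every $x \in P$, and, similarly, for every $k$-chain $\{x_1, \dotsc, x_k\}$ the event $\{x_1, \dotsc, x_k\} \subseteq \cC$ has probability $1 / c_k\!\left(\bigcup_{j=1}^k P_{r(x_j)}\right)$. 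Feeding these into linearity of expectation and applying a discrete Jensen's inequality to the convex function $m_k$ on the first term, exactly as in Section~\ref{sec:proof-sketch}, converts $\Ex[f(\cA \cap \cC)] \le 0$ into $c_k(\cA) \ge m_k(|\cA|)$.

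\textbf{Strict version and the main obstacle.} For the uniqueness statement, assume $P$ is strictly descending and a centred family of size $a$ contains a $k$-chain. Strict descent should upgrade $m_k$ to strict convexity on the relevant range of $a$ and should also upgrade the crucial inequality to a strict one unless the rank set $I$ (or its random analogue) is already centred around $n/2$. Combined with the equality conditions in Jensen's inequality, this should allow equality in $c_k(\cA) \ge m_k(|\cA|)$ to be propagated back through all maximum chains $\cC$ and force both conditions~\ref{item:centred-1} and~\ref{item:centred-2} on $\cA$. The main obstacle is proving the probabilistic form of the crucial inequality: this is the genuinely combinatorial step, while everything else is linear-in-expectation bookkeeping. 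The strict-uniqueness analysis is also delicate because condition~\ref{item:centred-2} involves an ``either \ldots\ or \ldots'' disjunction on complementary ranks $i$ and $n-i$ that must be matched to a global combinatorial feature of $\cA$ rather than to a local one.
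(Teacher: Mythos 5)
Your derivation of the first assertion is essentially identical to the paper's: the same reference family $\cA_a$ and function $m_k$, the same two ingredients (convexity of $m_k$ and the inequality $c_k(\bigcup_{i\in I}P_i)\ge m_k(\sum_{i\in I}|P_i|)$ for unions of rank levels), and the same random-maximum-chain computation combining $\Pr(x\in\cC)\,|P_{r(x)}|=1$, discrete Jensen, and linearity of expectation. The paper likewise proves the crucial inequality by the probabilistic generalisation and mass-shifting compression you describe, so there is nothing to add on that half.

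The gap is in the uniqueness assertion, which you leave as a plan, and the plan as stated has one inaccuracy: strict descent does \emph{not} make $m_k$ strictly convex on any nontrivial range, because $\Delta m_k$ is constant while one fills a single rank level. What the paper actually proves (second part of Lemma~\ref{lemma:mk-convex}) is that $\Delta m_k$ jumps precisely at the partial sums $a_\ell$ of the $\ell$ largest ranks, for every $\ell\ge k-1$. The equality case of discrete Jensen then only tells you that $\sum_{x\in\cA\cap\cC}|P_{r(x)}|$ is confined to a single interval $[a_\ell,a_{\ell+1}]$ with $\ell\ge k-1$ as $\cC$ ranges over maximum chains; one must then combine this with $f(\cA\cap\cC)=0$ and the strict case of the level-union inequality to force $r(\cA\cap\cC)$ to be a centred set of size $\ell$ or $\ell+1$ for \emph{every} maximum chain $\cC$, and finally deduce conditions \ref{item:centred-1} and \ref{item:centred-2} from that uniform local structure (for \ref{item:centred-2}, by exhibiting two maximum chains $\cC$, $\cC'$ through the offending pairs whose traces $r(\cA\cap\cC)$, $r(\cA\cap\cC')$ would have to differ in size by at least $2$). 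None of these steps is routine bookkeeping, so as written the second assertion is not yet proved.
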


In order to deduce Theorems~\ref{thm:Kleitman} and~\ref{thm:vector-spaces}, it now suffices to check here that the boolean lattice and the poset of subspaces of a finite vector space are strictly descending, homogeneous, and symmetric. The ranking function of the boolean lattice is simply the cardinality. The homogeneity and symmetry properties are straightforward to verify. This poset is strictly descending as if $0< i < j \le n$, then
\[
  c_2'(i, j) = \binom{n-i}{j-i} < \binom{n-i+1}{j-i} = c_2'(i-1, j-1).
\]
The ranking function of the poset of subspaces of a finite vector space is simply the dimension. To see that it is homogeneous, take any two chains of maximal length. Each of them gives rise to a sequence of vectors that form a basis of the entire vector space. The linear operator that maps one of these sequences to the other induces an isomorphism of the entire vector space that maps one of the chains to the other. The function mapping each subspace of the vector space to its orthogonal complement is an isomorphism between the poset and its reverse; this proves that this poset is symmetric. To see that this poset is also strictly descending, note first that every finite vector space is isomorphic to $\Fq^n$ for some integer $n$ and prime power $q$. By elementary linear algebra, the number of subspaces of $\Fq^n$ of dimension $i$ is
\[
  \prod_{\ell = 0}^{i-1} \frac{q^n-q^\ell}{q^i-q^\ell}
\]
and if $0\le i < j \le n$, then $c_2(i,j)$ equals the number of subspaces of dimension $j-i$ of the space $\Fq^{n-i}$. Therefore, if $0 < i < j \le n$, then
\[
  c_2(i,j) = \prod_{\ell = 0}^{j-i-1} \frac{q^{n-i}-q^\ell}{q^{j-i}-q^\ell} < \prod_{\ell = 0}^{j-i-1} \frac{q^{n+1-i}-q^\ell}{q^{j-i}-q^\ell} = c_2(i-1, j-1).
\]
In particular, the poset of subspaces of every finite vector space is strictly descending.

\section{Lemmata and the derivation of Theorem~\ref{thm:main}}
\label{sec:key-lemmas-proof}

\subsection{Key lemmata}
\label{sec:key-lemmas}

The proof of Theorem~\ref{thm:main} will employ a discrete version of Jensen's inequality, which we present below. Suppose that $a$ and $b$ are integers with $a < b$ and consider a function $f \colon \{a, \dotsc, b\} \to \Reals$. The \emph{(backward) discrete derivative} of $f$ is the function $\Delta f \colon \{a+1, \dotsc, b\}$ defined by
\[
  \Delta f(x) = f(x) - f(x-1).
\]
We shall say that $f$ is \emph{convex} if $\Delta f$ is nondecreasing. The following statement is a discrete version of Jensen's inequality.

\begin{lemma}
  \label{lemma:Jensen}
  Let $a$ and $b$ be integers satisfying $a < b$ and suppose that $f \colon \{a, \dotsc, b\} \to \Reals$ is convex. If $X$ is an random variable taking values in $\{a, \dotsc, b\}$ such that $\Ex[X]$ is an integer, then $\Ex[f(X)] \ge f(\Ex[X])$. Moreover, this inequality is strict unless there are integers $c$ and $d$ such that $\Pr(c \le X \le d) = 1$ and $\Delta f$ is constant on $\{c+1, \dotsc, d\}$.
\end{lemma}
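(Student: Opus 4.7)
The plan is to adapt the standard supporting-line argument to the integer setting. Since $m := \Ex[X]$ is by hypothesis an integer in $\{a, \dotsc, b\}$, and $\Delta f$ is nondecreasing, I would first fix a slope $s \in \Reals$ satisfying $\Delta f(m) \le s \le \Delta f(m+1)$, dropping whichever of these one-sided constraints is vacuous in the boundary cases $m = a$ or $m = b$. Such an $s$ exists because $\Delta f(m) \le \Delta f(m+1)$ by convexity.

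With such an $s$ in hand, the central technical step is the pointwise supporting-line bound
\[
  f(x) \ge f(m) + s(x - m) \quad \text{for every } x \in \{a, \dotsc, b\}.
\]
I would establish this by telescoping $f(x) - f(m)$ as a sum of the $\Delta f(i)$ over $i$ between $m$ and $x$: monotonicity of $\Delta f$ together with the choice of $s$ forces every such term to be at least $s$ when $x > m$ and at most $s$ when $x < m$, and in both cases the claimed inequality falls out after accounting for the sign of $x - m$. Taking expectations of this pointwise bound and using $\Ex[X] = m$ then annihilates the linear term, yielding $\Ex[f(X)] \ge f(m) = f(\Ex[X])$.

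For the moreover part, equality in the expectation forces the pointwise bound to be tight for every $x$ in the support of $X$. Tracing back through the telescoping argument, this is exactly the statement that $\Delta f(i) = s$ for every integer $i$ strictly between the minimum and maximum of the support; setting $c$ and $d$ to be these two extrema gives $\Pr(c \le X \le d) = 1$ together with the claimed constancy of $\Delta f$ on $\{c+1, \dotsc, d\}$. The converse direction is immediate, because if $\Delta f$ is constant on $\{c+1, \dotsc, d\}$ then $f$ is affine on $\{c, \dotsc, d\}$ and Jensen's inequality is an equality for any random variable supported there. I expect the only real obstacle to be purely notational bookkeeping: the boundary cases where $m \in \{a, b\}$ and only a one-sided bound on $s$ is available, and the degenerate case where $X$ equals $m$ almost surely, so that $c = d = m$ and the set $\{c+1, \dotsc, d\}$ is empty and the constancy condition is satisfied vacuously.
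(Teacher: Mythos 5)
Your proof is correct, and it takes a more self-contained route than the paper does. The paper disposes of this lemma in one line: it extends $f$ to the piecewise-linear convex function $g$ on $[a,b]$ obtained by interpolating between consecutive integer points and then invokes the standard (continuous) Jensen inequality for $g$, noting that $g(\Ex[X]) = f(\Ex[X])$ because $\Ex[X]$ is an integer. You instead work entirely in the discrete setting: you pick a slope $s$ with $\Delta f(m) \le s \le \Delta f(m+1)$ at the integer mean $m = \Ex[X]$, prove the supporting-line bound $f(x) \ge f(m) + s(x-m)$ by telescoping $\Delta f$, and take expectations. The two arguments rest on the same mechanism (a supporting line at the mean), but yours has two advantages: it avoids importing the continuous theory, and—more importantly—it makes the equality analysis explicit. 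The paper's sketch does not address the ``moreover'' clause at all, whereas your telescoping argument directly identifies equality with $\Delta f$ being constant on $\{c+1,\dotsc,d\}$ for $c$ and $d$ the extremes of the support, including the vacuous case $c=d$. Your handling of the boundary cases $m \in \{a,b\}$ (where only a one-sided constraint on $s$ is available) is also correct.
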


One may easily prove Lemma~\ref{lemma:Jensen} by invoking the standard version of Jensen's inequality for any convex function $g \colon [a,b] \to \Reals$ that extends $f$. One such function $g$ is defined by letting
\[
  g(x) = \left(\lceil x \rceil - x\right) \cdot f\left(\lfloor x \rfloor\right) + \left(x - \lfloor x \rfloor\right) \cdot f\left(\lceil x \rceil\right) \quad \text{for $x \in [a,b] \setminus \Int$}.
\]

We shall say that a set $I \subseteq \{0, \dotsc, n\}$ is \emph{centred} if $I$ is centred in the sense of Definition~\ref{dfn:centred} with $P$ being the standard ordering $\le$ of the set $\{0, \dotsc, n\}$. The following easy fact will be needed to derive the second part of Theorem~\ref{thm:main}, that is, the characterisation of subsets that minimise the number of chains.

\begin{fact}
  \label{fact:descending-unimodal}
  Suppose that $P$ is a homogeneous, symmetric, and descending poset of height $n+1$. Then $P$ is rank-unimodal. In particular, if $i, j \in [0,n]$ satisfy $|i-n/2| \le |j - n/2|$, then $|P_i| \ge |P_j|$. Moreover, if $P$ is strictly descending and $|i - n/2| < |j - n/2|$, then $|P_i| < |P_j|$.
\end{fact}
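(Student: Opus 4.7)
The plan is to derive a clean formula for the ratio $|P_{i+1}|/|P_i|$ and then read off unimodality directly from the descending hypothesis.

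First, double-counting pairs $(x, y) \in P_i \times P_{i+1}$ with $x \prec y$, together with Fact~\ref{fact:ckLI-hom}, gives
\[
  |P_i| \cdot c_2'(i, i+1) \;=\; |P_{i+1}| \cdot c_2'(i+1, i).
\]
The key move is to rewrite the ``downward'' count $c_2'(i+1, i)$ as an ``upward'' one using symmetry. An order-isomorphism $\sigma \colon (P, \pl) \to (P, \pg)$ must send $P_k$ bijectively onto $P_{n-k}$ for every $k$, since it reverses the ranking function. Applied to the set of elements of rank $i$ below a fixed element of rank $i+1$, it produces a bijection with the set of elements of rank $n-i$ above a fixed element of rank $n-i-1$, which gives simultaneously $|P_i| = |P_{n-i}|$ and $c_2'(i+1, i) = c_2'(n-i-1, n-i)$. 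Substituting,
\[
  \frac{|P_{i+1}|}{|P_i|} \;=\; \frac{c_2'(i, i+1)}{c_2'(n-i-1, n-i)}.
\]

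Second, iterating the descending inequality $c_2'(k, k+1) \le c_2'(k-1, k)$ yields the telescoping chain
\[
  c_2'(i, i+1) \;\ge\; c_2'(i+1, i+2) \;\ge\; \dotsb \;\ge\; c_2'(n-i-1, n-i),
\]
so the above ratio is at least $1$ whenever $i \le n-i-1$, that is, whenever $i+1$ is at most as far from $n/2$ as $i$. Combined with the symmetry $|P_i| = |P_{n-i}|$, this directly yields rank-unimodality and the desired comparison $|P_i| \ge |P_j|$ whenever $|i - n/2| \le |j - n/2|$.

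Third, in the strictly descending case each step in the chain above is strict, so the chain produces a strict inequality as soon as it contains at least one step, i.e., whenever $i < n-i-1$; an easy arithmetic check shows this is exactly the condition $|i+1 - n/2| < |i - n/2|$. Iterating this strict step across consecutive ranks and using $|P_j| = |P_{n-j}|$ to reduce to the case where $i$ and $j$ lie on the same side of $n/2$, one obtains the strict comparison $|P_i| > |P_j|$ whenever $|i - n/2| < |j - n/2|$. The argument is essentially elementary; the only mildly delicate point is the bookkeeping that translates between the integer bound on the number of descending steps and the (possibly half-integer) distance to $n/2$, which is handled by a short case analysis on the parity of $n$.
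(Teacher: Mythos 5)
Your proof is correct and follows essentially the same route as the paper's: both rest on the double-count identity $|P_i|\cdot c_2'(i,j) = |P_j|\cdot c_2'(j,i)$, the symmetry $c_2'(I,J) = c_2'(n-I,n-J)$ from Fact~\ref{fact:ckIJ-sym}, and an iteration of the descending inequality; you merely organise the comparison through consecutive ranks and then telescope, whereas the paper compares $P_i$ and $P_j$ in a single step via Fact~\ref{fact:ckiJ-shift} with shift $s=n-i-j$. Note that your conclusion $|P_i|>|P_j|$ in the strictly descending case is the intended one --- the inequality $|P_i|<|P_j|$ in the statement of the Fact is evidently a typo, since it would contradict the first assertion.
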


Suppose that $P$ is a homogeneous, symmetric, and descending poset of height $n+1$. It follows from Fact~\ref{fact:descending-unimodal} that for every $\ell \in \{1, \dotsc, n+1\}$ sets of $\ell$ largest ranks of $P$ are formed by ranks whose indices fall into (one of at most two) centred sets $I \subseteq \{0, \dotsc, n\}$ of cardinality $\ell$. Moreover, if $P$ is strictly descending, then the centred $\ell$-element sets are the only sets of $\ell$ largest ranks. Keeping this in mind, for every $\ell$ as above, let $I_\ell$ be one of the at most two centred subsets of $\{0, \dotsc, n\}$ with $\ell$ elements (the other centred set of the same size is $n - I_\ell$) and define
\[
  a_\ell = \sum_{i \in I_\ell} |P_i|.
\]
In other words, $a_\ell$ is the number of elements in the $\ell$ largest ranks of $P$. (If $P$ is the boolean lattice, then $a_\ell$ is the sum of the $\ell$ largest binomial coefficients.) For every integer $a$ with $0 \le a \le |P|$, let $m_k(a)$ denote the number of $k$-chains in some centred $a$-element subset of $P$. It is not very difficult to see that this definition does not depend on a particular choice of the $a$-element subset, as long as $P$ is homogeneous and symmetric. Our first key lemma states that the function $m_k$ is convex and that its discrete derivative $\Delta m_k$ increases at each $a_\ell$ with $\ell \ge k-1$.

\begin{lemma}
  \label{lemma:mk-convex}
  Suppose that $P$ is a homogeneous, symmetric, and descending finite poset. For each $k \ge 1$, the function $m_k \colon \{0, \dotsc, |P|\} \to \Nat$ is convex. Moreover, $\Delta m_k(a_\ell) < \Delta m_k(a_\ell+1)$ for every $\ell \ge k-1$.
\end{lemma}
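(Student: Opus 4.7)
The plan is to show that $m_k$ is piecewise-linear in $a$ with a non-decreasing sequence of slopes. For $a \in [a_\ell, a_{\ell+1}]$, a centered $a$-element set has the form $\bigcup_{i \in I_\ell} P_i \cup B$ with $B \subseteq P_{i_{\ell+1}}$ of size $a - a_\ell$; since every $k$-chain has pairwise distinct ranks, a $k$-chain through an element of $B$ has its other $k-1$ elements in $\bigcup_{i \in I_\ell} P_i$, and by homogeneity (Fact~\ref{fact:ckLI-hom}) the number of such chains equals $c_k'(i_{\ell+1}, I_\ell)$ for every $x \in B$. Thus $m_k(a) = m_k(a_\ell) + (a - a_\ell)\, s_\ell$ on this interval with $s_\ell := c_k'(i_{\ell+1}, I_\ell)$, so $\Delta m_k$ is constant equal to $s_\ell$ on $(a_\ell, a_{\ell+1}]$ and convexity reduces to the inequality $s_\ell \le s_{\ell+1}$ for every $\ell$.

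I would split the verification of $s_\ell \le s_{\ell+1}$ into two cases determined by the centered order. In \emph{Case~A}, where $|i_{\ell+1} - n/2| = |i_{\ell+2} - n/2|$, the set $I_\ell$ is itself symmetric about $n/2$; since the symmetry of $P$ gives $c_k'(i_{\ell+1}, I_\ell) = c_k'(n - i_{\ell+1}, n - I_\ell)$ and here $n - i_{\ell+1} = i_{\ell+2}$ and $n - I_\ell = I_\ell$, we get $c_k'(i_{\ell+1}, I_\ell) = c_k'(i_{\ell+2}, I_\ell)$, and trivial monotonicity in the rank set yields $s_{\ell+1} \ge s_\ell$. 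In \emph{Case~B}, where $|i_{\ell+2} - n/2| > |i_{\ell+1} - n/2|$, a short analysis of the centered order shows that $I_{\ell+1}$ itself is symmetric about $n/2$. Since $k$-chains have distinct ranks, $c_k'(x, J) = c_k'(x, J \setminus \{r(x)\})$ whenever $r(x) \in J$, and in particular $s_\ell = c_k'(i_{\ell+1}, I_{\ell+1})$. Using the symmetries of $P$ and $I_{\ell+1}$ to assume without loss of generality that $i_{\ell+1}, i_{\ell+2} \ge n/2$, both ranks satisfy $i \ge \max I_{\ell+1}$, and the inequality reduces to showing that $c_k'(i, I_{\ell+1})$ is non-decreasing in $i$ for such $i$.

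The key technical step is the following sublemma, which I would prove from the homogeneity of $P$ alone: $c_2'(i, r) \le c_2'(i+1, r)$ whenever $0 \le r < i \le n-1$. Counting $3$-chains with ranks $r < i < i+1$ in two ways yields
\[
  c_2'(r, i) \cdot c_2'(i, i+1) \;=\; c_2'(r, i+1) \cdot m(r, i, i+1),
\]
where $m(r, i, i+1)$ is the number of rank-$i$ elements strictly between a rank-$r$ element and a comparable rank-$(i+1)$ element (well-defined by homogeneity). These `in-between' elements lie in particular below the rank-$(i+1)$ element, so $m(r, i, i+1) \le c_2'(i+1, i)$; combined with the standard double count $c_2'(a, b)\,|P_a| = c_2'(b, a)\,|P_b|$, this rearranges to the sublemma. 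To deploy it, the product formula for chains in a homogeneous poset gives
\[
  c_k'(i, I_{\ell+1}) \;=\; \sum_{\substack{r \in I_{\ell+1} \\ r < i}} c_2'(i, r) \cdot T_r
\]
for $i \ge \max I_{\ell+1}$, with nonnegative weights $T_r$ independent of $i$; applying the sublemma term by term yields the required monotonicity.

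For the \emph{moreover} assertion ($s_{\ell-1} < s_\ell$ whenever $\ell \ge k - 1$), I would revisit the estimates above and observe that in both Case~A and Case~B the difference $s_\ell - s_{\ell-1}$ bounds from below the number of $k$-chains through a fixed rank-$i_{\ell+1}$ element that use the rank newly added to $I_\ell$ together with $k-2$ other ranks drawn from $I_{\ell-1}$; by homogeneity such a chain exists as soon as $|I_{\ell-1}| \ge k - 2$, i.e.\ $\ell \ge k - 1$. The principal obstacle I expect is the Case~B comparison, whose reduction to the sublemma rests on the rather delicate rewriting $s_\ell = c_k'(i_{\ell+1}, I_{\ell+1})$ via the distinct-ranks identity, which brings both sides under a common monotonicity regime.
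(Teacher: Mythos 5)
Your proposal is correct, and its skeleton coincides with the paper's: observe that $\Delta m_k$ is constant on each interval $(a_\ell, a_{\ell+1}]$ with value $s_\ell = c_k'(\mu(\ell+1),\mu([\ell]))$, and then compare consecutive slopes in two cases according to whether the newly added rank is at the same distance from $n/2$ as the previous one (your Case~A, handled identically to the paper via Fact~\ref{fact:ckIJ-sym} and monotonicity in the rank set, with the same source of strictness) or strictly farther. The only genuine divergence is in Case~B. The paper reduces, via the symmetry of $\mu([\ell])$, to showing $c_k'(i,\mu([\ell]))\le c_k'(i-1,\mu([\ell]))$ for $i=\min\mu([\ell])$, and proves this in one line by fixing $x\prec y$ with $r(x)=i-1$, $r(y)=i$ and injecting $C_k'(y,\mu([\ell]))$ into $C_k'(x,\mu([\ell]))$ by replacing $y$ with $x$ (all other chain elements lie above $y$, hence above $x$); strictness comes from chains through both $x$ and $y$, which exist exactly when $\ell\ge k-1$. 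You instead work above $\max\mu([\ell])$ (equivalent by symmetry) and prove the needed monotonicity by first establishing the two-rank statement $c_2'(i,r)\le c_2'(i+1,r)$ via a double count of $3$-chains together with $m(r,i,i+1)\le c_2'(i+1,i)$ and the level-counting identity, and then lifting it to $k$-chains through the decomposition $c_k'(i,J)=\sum_r c_2'(i,r)T_r$. Both steps are valid (well-definedness of $m(r,i,i+1)$ and of the weights $T_r$ follows from homogeneity exactly as in Fact~\ref{fact:ckLI-hom}, and the extra $r=\max J$ term in your decomposition correctly supplies the strictness for $\ell\ge k-1$), but your route is noticeably heavier than the paper's replacement injection, which gives the $k$-chain inequality directly without passing through the $c_2'$ sublemma. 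On the other hand, your sublemma is a clean standalone fact about homogeneous graded posets (a normalised-matching-type monotonicity of down-degrees) that might be of independent interest; like the paper's argument, it uses only homogeneity and symmetry, never the descending hypothesis.
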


Our second key lemma states that the union of any collection of rank levels of $P$ contains at least as many $k$-chains as a centred set of the same cardinality.

\begin{lemma}
  \label{lemma:main}
  Suppose that $P$ is a homogeneous, symmetric, and descending poset with height $n+1$. For every $I \subseteq \{0, \dotsc, n\}$ and every $k \ge 2$,
  \[
    c_k\left( \bigcup_{i \in I} P_i \right) \ge m_k\left( \sum_{i \in I} |P_i| \right).
  \]
  Moreover, if $P$ is strictly descending and $|I| \ge k$, then the above inequality is strict unless $I$ is centred.
\end{lemma}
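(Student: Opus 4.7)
The plan is to induct on $|I|$: at each step I remove a most peripheral rank of $I$ and compare the resulting drop in $c_k$ with the drop in $m_k$, using the descending property and the convexity provided by Lemma \ref{lemma:mk-convex}.

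First I handle the trivial and centred cases. If $|I| < k$ then a $k$-chain requires $k$ distinct ranks, so $c_k\bigl(\bigcup_{i \in I} P_i\bigr) = 0$; by Fact \ref{fact:descending-unimodal} the quantity $\alpha_I := \sum_{i \in I}|P_i|$ satisfies $\alpha_I \le a_{|I|} \le a_{k-1}$, and Lemma \ref{lemma:mk-convex} (which pinpoints the first strict increase of $\Delta m_k$ at $a_{k-1}+1$) gives $m_k(\alpha_I) = 0$. If $I$ is centred of any size then $\bigcup_{i \in I} P_i$ is itself a centred set of cardinality $\alpha_I$, and equality holds by the definition of $m_k$.

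For the inductive step, assume $|I| \ge k$ and $I$ is non-centred. Let $i_0 \in I$ maximise $|i_0 - n/2|$ and set $J = I \setminus \{i_0\}$; by induction $c_k\bigl(\bigcup_{i \in J} P_i\bigr) \ge m_k(\alpha_J)$, so it suffices to prove
\[
  c_k\Bigl(\bigcup_{i\in I}P_i\Bigr) - c_k\Bigl(\bigcup_{i\in J}P_i\Bigr) \;\ge\; m_k(\alpha_I) - m_k(\alpha_J).
\]
By homogeneity the LHS equals $|P_{i_0}| \cdot c_k'(\{i_0\}, J)$, i.e.\ $|P_{i_0}|$ copies of the number of $k$-chains through a fixed element of $P_{i_0}$ whose remaining ranks lie in $J$. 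Because centred sets grow rank by rank in order of decreasing $|P_i|$, the RHS equals $\sum_{a = \alpha_J + 1}^{\alpha_I}\Delta m_k(a)$, and on each interval $a \in (a_{\ell-1}, a_\ell]$ one has $\Delta m_k(a) = c_k'(\{r_\ell\}, \{r_1, \dots, r_{\ell-1}\})$, where $r_1, r_2, \dots$ enumerates the ranks of $P$ in order of decreasing centredness.

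Since the RHS sum has exactly $|P_{i_0}| = \alpha_I - \alpha_J$ terms, it suffices to show that each summand $\Delta m_k(a)$ is at most $c_k'(\{i_0\}, J)$. By Lemma \ref{lemma:mk-convex}, $\Delta m_k$ is non-decreasing, so it suffices to bound the largest summand, corresponding to the largest $\ell$ that appears (which is at most $|I|$). Thus the crux is the inequality
\[
  c_k'(\{i_0\}, J) \;\ge\; c_k'(\{r_\ell\}, \{r_1, \dots, r_{\ell-1}\}),
\]
and since $I$ is non-centred, $i_0$ is at least as peripheral as $r_\ell$ while $J$ is ``at least as peripheral'' as $\{r_1, \dots, r_{\ell-1}\}$ in a suitable sense. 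I would derive this by iterating the descending inequality $c_2'(i, j) \le c_2'(i-1, j-1)$ together with homogeneity and the symmetry of $P$ (to reflect indices on one side of $n/2$): pair the ranks in $J$ with those in $\{r_1, \dots, r_{\ell-1}\}$ so that each $J$-rank is at least as peripheral as its partner, then shift one pair at a time. The main obstacle I anticipate is the combinatorial bookkeeping — the pairing is not canonical when $J$ and $\{r_1,\dots,r_{\ell-1}\}$ are incomparable term by term, and the sum on the RHS may also cross a breakpoint $a_\ell$, at which $\Delta m_k$ jumps. For the strict inequality in the strictly descending case, the iteration invokes the strict version of the descending property at least once, which is guaranteed whenever $P$ is strictly descending, $|I|\ge k$, and $I$ is non-centred, since then $i_0$ is strictly more peripheral than $r_{|I|}$.
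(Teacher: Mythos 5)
Your reduction is genuinely different from the paper's route and its first half is correct: the increment $c_k\bigl(\bigcup_{i\in I}P_i\bigr)-c_k\bigl(\bigcup_{i\in J}P_i\bigr)$ does equal $|P_{i_0}|\cdot c_k'(i_0,J)$, the corresponding increment of $m_k$ telescopes into exactly $|P_{i_0}|$ values of $\Delta m_k$, and by convexity everything reduces to the single inequality $c_k'(i_0,J)\ge c_k'(\mu(\ell),\mu([\ell-1]))$ with $\ell\le|I|$. But that inequality is the entire content of the lemma, and the method you sketch for it does not go through. The descending hypothesis, via Fact~\ref{fact:ckiJ-shift}, only controls \emph{rigid} translations $c_k'(i,J)\le c_k'(i-s,J-s)$ of a whole configuration; it says nothing about moving one rank of $J$ while the others stay put. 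Worse, there is no termwise monotonicity along the path you propose: making a rank of $J$ more peripheral can strictly \emph{decrease} the count through a fixed base element. In the boolean lattice with $n=10$ one has $c_3'(0,\{5,6\})=\binom{10}{6}\binom{6}{5}=1260$ but $c_3'(0,\{5,10\})=\binom{10}{5}=252$, although $\{5,10\}$ is obtained from $\{5,6\}$ by pushing a rank away from $n/2$. So ``pair each $J$-rank with a more central partner and shift one pair at a time'' cannot be implemented by iterating $c_2'(i,j)\le c_2'(i-1,j-1)$; in addition $|J|=|I|-1$ may strictly exceed $|\mu([\ell-1])|=\ell-1$, so you would also need an injection between the families of $(k-1)$-subsets over which the two quantities $c_k'(\cdot,\cdot)$ are summed. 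The strictness assertion inherits the same gap (and the degenerate case $|I|\ge k$ with $\alpha_I\le a_{k-1}$, where $m_k(\alpha_I)=0<c_k(\bigcup_{i\in I}P_i)$, should be noted separately).

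For comparison, the paper avoids exactly this obstruction by relaxing $I$ to a vector $\bp\in[0,1]^{n+1}$ with $\sum_i p_i|P_i|$ fixed and compressing mass from the outermost nonzero coordinate $p_i$ to the innermost coordinate $p_{i'}<1$. The decisive point (Claim~\ref{claim:delta-ck-comp}) is that configurations are compared in matched pairs $J\cup\{i\}$ versus $\sigma(J)\cup\{i'\}$, where $\sigma$ reverses the interval $[i+1,i'-1]$: after this reflection the two configurations differ only by a rigid shift, a reflection of the whole poset, and a change of bottom element, so Facts~\ref{fact:ckiJ-shift} and~\ref{fact:ckIJ-sym} together with a replace-the-bottom-element injection apply, and the weights satisfy $\delta|P_i|=\delta'|P_{i'}|$ so the rank sizes cancel. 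Some such global pairing of configurations, rather than a rank-by-rank monotone path, is what your argument is missing; without it the crux inequality remains unproved, so the proposal has a genuine gap.
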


We postpone the proofs of both lemmas to the next section and show first how to use them to prove Theorem~\ref{thm:main}. We only mention here that while proving Lemma~\ref{lemma:mk-convex} is rather easy and straightforward, establishing Lemma~\ref{lemma:main} requires work. The main idea behind our proof of the latter is to first generalise its statement by replacing $\bigcup_{i \in I} P_i$ with a random subset of $P$ that is uniform in each rank level and independent between different rank level. We then show that for every $a$, the expected number of $k$-chains in such a random set with expected cardinality $a$ is minimised when the probabilities assigned to the elements of $P$ are the largest for elements whose ranks are the closest to $n/2$; one such distribution is supported on centred families with $a$ elements. We achieve this by defining an operator on the set of all such distributions that pushes the support of a distribution towards the middle rank levels, at the same time decreasing the expected number of $k$-chains.

\subsection{Derivation of Theorem~\ref{thm:main}}
\label{sec:derivation-theorem}

The derivation of the first assertion of Theorem~\ref{thm:main} is simply a reiteration of the argument that we already presented in Section~\ref{sec:proof-sketch}. We mention here again that our argument draws inspiration from~\cite[Lemma~2.2]{NoScSu}. The second assertion of the theorem is obtained by the means of a careful analysis of the cases of equality in the two crucial inequalities appearing in the proof of the first assertion.

\begin{proof}[Proof of Theorem~\ref{thm:main}]
  Suppose that $P$ is a homogeneous, symmetric, and descending poset of height $n+1$. Define the following integer-valued function $f$ on subsets of $P$. For every $X \subseteq P$, let
  \[
    f(X) = m_k\left( \sum_{x \in X} |P_{r(x)}|\right) - c_k\left( \bigcup_{x \in X} P_{r(x)}\right).
  \]
  It follows from Lemma~\ref{lemma:main} that $f(C)$ is nonpositive for every chain $C$ of $P$. Indeed, every chain of $C$ contains at most one element of each rank. Now, fix an arbitrary $A \subseteq P$ and let $C$ be a uniformly selected random chain of maximum length. Since $A \cap C$ is also a chain, we have $f(A \cap C) \le 0$. On the other hand, as for each $I \subseteq \{0, \dotsc, n\}$,
  \[
    c_k\left( \bigcup_{i \in I} P_i \right) = \sum_{\substack{i_1, \dotsc, i_k \in I \\ i_1 < \dotsc < i_k}} c_k\left( \bigcup_{j = 1}^k P_{i_j} \right),
  \]
  the,
  \begin{equation}
    \label{eq:nonnegativity}
    0 \ge \Ex[f(A \cap C)] = \Ex\left[m_k\left(\sum_{x \in A \cap C} |P_{r(x)}| \right)\right] - \Ex\left[ \sum_{\substack{x_1, \dotsc, x_k \in A \cap C \\ x_1 \prec \dotsc \prec x_k}} c_k\left(\bigcup_{j=1}^k P_{r(x_j)} \right) \right].
  \end{equation}
  The random chain $C$ contains exactly one element of rank $i$ for each $i \in \{0, \dotsc, n\}$. Since $P$ is homogeneous, each such element is equally likely to appear in $C$. It follows that $\Pr(x \in C) \cdot |P_{r(x)}| = 1$ for each $x \in P$.  Since $m_k$ is convex, it follows from Jensen's inequality (Lemma~\ref{lemma:Jensen}) that
  \begin{equation}
    \label{eq:Jensen}
    \Ex\left[m_k\left(\sum_{x \in A \cap C} |P_{r(x)}| \right)\right] \ge m_k\left(   \Ex\left[\sum_{x \in A \cap C} |P_{r(x)}| \right]  \right) = m_k\left( \sum_{x \in A} \Pr(x \in C) \cdot |P_{r(x)}|\right) = m_k(|A|),
  \end{equation}
  Generalising the above, if $0 \le i_1 < \dotsc < i_k \le n$, then the random chain $C$ contains exactly one $k$-chain of elements with ranks $i_1, \dotsc, i_k$. Since $P$ is homogeneous, each such chain is equally likely to appear in $C$. It follows that $\Pr(\{x_1, \dotsc, x_k\} \subseteq C) \cdot c_k\left(\bigcup_{j=1}^k P_{r(x_j)} \right) = 1$ whenever $x_1 \prec \dotsc \prec x_k$. Consequently,
  \[
    \Ex\left[ \sum_{\substack{x_1, \dotsc, x_k \in A \cap C \\ x_1 \prec \dotsc \prec x_k}} c_k\left(\bigcup_{j=1}^k P_{r(x_j)} \right) \right] = \sum_{\substack{x_1, \dotsc, x_k \in A \\ x_1 \prec \dotsc \prec x_k}} \Pr(\{x_1, \dotsc, x_k\} \subseteq C) \cdot c_k\left(\bigcup_{j=1}^k P_{r(x_j)} \right) = c_k(A).
  \]
  We conclude that $c_k(A) \ge m_k(|A|)$.

  For the second assertion of the theorem, assume that $P$ is strictly descending, that $k \ge 2$, and that $A \subseteq P$ is an $a$-element set that satisfies $c_k(A) = m_k(a) > 0$. We need to show that $A$ is centred. Since $m_k(a) > 0 = m_k(a_{k-1})$, it must be that $a > a_{k-1}$. Crucially, since $c_k(A) = m_k(a)$, both~\eqref{eq:nonnegativity} and~\eqref{eq:Jensen} must hold with equality. The former implies that $f(A \cap C) = 0$ for every chain $C$ of maximum length, whereas the latter and Lemma~\ref{lemma:Jensen} imply that, as $C$ ranges over all chains of maximum length, the sum $\sum_{x \in A \cap C} |P_{r(x)}|$ takes values between some integers $c$ and $d$ for which $\Delta m_k(c+1) = \Delta m_k(d)$. Since the expected value of this sum is $a$, the second part of Lemma~\ref{lemma:mk-convex} implies that $a_\ell \le c \le a \le d \le a_{\ell+1}$ for some $\ell \ge k-1$. As $a_\ell$ is the sum of the $\ell$ largest ranks of $P$, then for every longest chain $C$, the set $A \cap C$ has at least $\ell$ elements. Moroever, $|A \cap C| > \ell$ unless $r(A \cap C)$ comprises some $\ell$ largest ranks of $P$, that is, unless $r(A \cap C)$ is centred, see the discussion following the statement of Fact~\ref{fact:descending-unimodal}. Furthermore, if $|A \cap C| > \ell \ge k-1$, then $r(A \cap C)$ is also centred, as $f(A \cap C) = 0$, by the second part of Lemma~\ref{lemma:main}. Consequently, for every chain $C$ of maximum length, the sum $\sum_{x \in A \cap C} |P_{r(x)}|$ is the sum of the $|A \cap C|$ largest ranks of $P$. Since this sum is bounded from above by $a_{\ell+1}$, then $|A \cap C| \le \ell+1$. To summarise, for each maximum chain $C$, the set $r(A \cap C)$ is centred and has either $\ell$ or $\ell+1$ elements. We claim that this property forces the set $A$ to be centred. Indeed, suppose first that $x \in A$ and $y \in P$ satisfy $|r(y) - n/2| < |r(x) - n/2|$. Since $x$ is contained in some longest chain, then $r(x)$ is among some $\ell+1$ largest ranks. As $r(y)$ is closer to $n/2$ than $r(x)$ is and $P$ is strictly descending, then $r(y)$ must belong to every set of largest $\ell$ ranks, see the discussion following the statement of Fact~\ref{fact:descending-unimodal}. As $y$ belongs to some longest chain $C$ and $r(A \cap C)$ contains some $\ell$ largest ranks of $P$, then $y \in A$. Finally, suppose that there is an $i < n/2$ such that $A$ contains some but not all elements of both $P_i$ and $P_{n-i}$. We claim that there cannot simultaneously be $x \in A \cap P_i$, $x' \in P_i \setminus A$, $y \in A \cap  P_{n-i}$, and $y' \in P_{n-i} \setminus A$ such that $x \prec y$ and $x' \prec y'$, which is precisely saying that $A$ satisfies~\ref{item:centred-2} in Definition~\ref{dfn:centred}. Suppose that there were such $x$, $x'$, $y$, and $y'$ and let $C$ and $C'$ be two chains of maximum length that contain $\{x,y\}$ and $\{x',y'\}$, respectively. Since both $r(A \cap C)$ and $r(A \cap C')$ are centred, then $r(A \cap C) \supseteq \{i, \dotsc, n-i\}$ and $r(A \cap C') \subseteq \{i+1, \dotsc, n-i-1\}$. But this contradicts the fact that $|r(A \cap C)| \le \ell+1 \le |r(A \cap C')| + 1$, completing the proof of the theorem.  
\end{proof}

\section{The proofs}
\label{sec:proofs}

In this section, we give proofs of the three key auxiliary results stated in Section~\ref{sec:key-lemmas}: Fact~\ref{fact:descending-unimodal} and Lemmas~\ref{lemma:mk-convex} and~\ref{lemma:main}. For the sake of brevity, given an integer $i$, we shall write $[i]$ to denote the set $\{1, \dotsc, i\}$. Moreover, for integers $i$ and $j$, we shall denote by $[i,j]$ the set of all integers $k$ with $i \le k \le j$, that is,
\[
  [i,j] = \{k \in \Int \colon i \le k \le j\}.
\]
We first give a proof of Fact~\ref{fact:descending-unimodal}, establishing two properties of the function $c_k'$ along the way.

\begin{fact}
  \label{fact:ckIJ-sym}
  Let $k$ be a positive integer, let $P$ be a homogeneous and symmetric poset of height $n+1$. For every $I, J \subseteq [0,n]$,
  \[
    c_k'(I, J) = c_k'(n-I, n-J).
  \]
\end{fact}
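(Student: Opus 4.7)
The plan is to use the symmetry of $P$ to construct a bijection on $k$-chains that simultaneously swaps the rank set $I$ for $n-I$ and the rank set $J$ for $n-J$. By definition, $P$ being symmetric means there is an order-reversing bijection $\sigma \colon P \to P$, that is, an isomorphism from $(P, \pl)$ to $(P, \pg)$. The first step is to verify the rank-reversal identity $r \circ \sigma = n - r$. This follows from the observation that in $(P, \pg)$ the rank function is $n - r$ (a maximal chain $x_0 \pl x_1 \pl \dotsc \pl x_n$ becomes, upon reversal, a maximal chain in $(P, \pg)$ in which $x_i$ sits at position $n - i$) together with the general fact that any order isomorphism preserves the rank function of its two posets.

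Next I would fix a chain $L$ of length $|I|$ with $r(L) = I$. Since an order-reversing bijection preserves pairwise comparability, $M \mapsto \sigma(M)$ is a bijection of $C_k(P)$ onto itself. Whenever $M \supseteq L$ satisfies $r(M \setminus L) \subseteq J$, one has $\sigma(M) \supseteq \sigma(L)$ and
\[
  r\bigl( \sigma(M) \setminus \sigma(L) \bigr) = r\bigl( \sigma(M \setminus L) \bigr) = n - r(M \setminus L) \subseteq n - J.
\]
Applying the same reasoning to $\sigma^{-1}$ shows that $\sigma$ restricts to a bijection between $C_k'(L, J)$ and $C_k'(\sigma(L), n - J)$, so these two sets have equal cardinality.

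Finally, since $r(\sigma(L)) = n - r(L) = n - I$, Fact~\ref{fact:ckLI-hom} together with the convention $c_k'(I, J) = c_k'(L, J)$ for any chain $L$ with $r(L) = I$ gives
\[
  c_k'(I, J) = c_k'(L, J) = c_k'\bigl(\sigma(L), n - J\bigr) = c_k'(n - I, n - J).
\]
The argument is essentially a definition chase once the correct bijection has been identified; the only step that really has content is the rank-reversal identity $r \circ \sigma = n - r$, which relies on $P$ having height exactly $n+1$ and on homogeneity to ensure every element lies on a maximal chain, so that ranks are in fact determined by the combinatorics of such chains.
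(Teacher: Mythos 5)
Your proof is correct and follows essentially the same route as the paper's: take an order-reversing automorphism $\varphi$ of $P$, observe that homogeneity and height $n+1$ force $r(\varphi(x)) = n - r(x)$, and check that $\varphi$ restricts to a bijection between $C_k'(L, J)$ and $C_k'(\varphi(L), n-J)$. Your write-up is somewhat more explicit about why the rank-reversal identity holds, but the argument is the same.
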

\begin{proof}
  Let $L$ be an arbitrary chain with $r(L) = I$ and let $\varphi$ be an arbitrary isomorphism between $(P, \pl)$ and $(P, \pg)$. Clearly, the same sets form chains in both $(P, \pl)$ and $(P, \pg)$. Moreover, since $P$ is homogeneous of height $n+1$, then for each $x \in P$, we have $r(\varphi(x)) = n-r(x)$. Thus, $\varphi$ is a bijection between $C_k'(L, J)$ and $C_k'(\varphi(L), n-J)$ and hence $c_k'(I,J) = c_k'(L, J) = c_k'(\varphi(L), n-J) = c_k'(n-I, n-J)$.
\end{proof}

\begin{fact}
  \label{fact:ckiJ-shift}
  Let $P$ be a descending homogeneous poset of height $n+1$. For every $k \ge 2$, every $i \in [n]$, every $1 \le s \le i$, and each $J \subseteq [i+1,n]$, we have
  \[
    c_k'(i, J) \le c_k'(i-s, J-s).
  \]
  Moreover, if $P$ is strictly descending and $|J| \ge k-1$, then the above inequality is strict.
\end{fact}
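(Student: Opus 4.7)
The plan is to induct on the chain length $k$, reducing everything to the defining descending inequality. The key input is a recursion obtained by decomposing each $k$-chain counted by $c_k'(i, J)$ according to the rank $j \in J$ of its second-smallest element; the fixed element of rank $i$ is automatically the minimum of the chain since $J \subseteq [i+1, n]$. Via homogeneity (Fact~\ref{fact:ckLI-hom}), this decomposition gives
\[
  c_k'(i, J) = \sum_{j \in J} c_2'(i, j) \cdot c_{k-1}'(j, J \cap [j+1, n]),
\]
together with the analogous identity for $c_k'(i-s, J-s)$ obtained by shifting every index down by $s$.

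The base case $k = 2$ reduces to $c_2'(i, J) = \sum_{j \in J} c_2'(i, j)$, and iterating the descending inequality $s$ times bounds each summand by $c_2'(i-s, j-s)$; the iteration is legitimate because $1 \le s \le i < j \le n$ keeps every index in the admissible range. For the inductive step $k \ge 3$, each $j \in J$ satisfies $j > i \ge s$, so the inductive hypothesis applies to the second factor of each summand and iterated descending applies to the first; multiplying and summing over $j$ completes the step.

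For the strict version, assuming $P$ strictly descending and $|J| \ge k - 1$, I single out the summand with $j = \min J$. Strict descending makes its first factor strictly smaller after $s \ge 1$ shifts, and its second factor $c_{k-1}'(j, J \cap [j+1, n])$ is nonzero because $|J \cap [j+1, n]| = |J| - 1 \ge k - 2$ and, in a homogeneous poset, every element of rank $j$ lies on some maximal chain, which supplies an element of every higher rank in $J \cap [j+1, n]$ and hence yields a $(k-1)$-chain of the required type through it. This summand contributes a strict gap, while the others contribute the non-strict bounds already established, giving strict inequality overall. The only point genuinely requiring care is this nonvanishing verification, since without it the strict improvement on the first factor could be nullified.
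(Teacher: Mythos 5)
Your proof is correct and follows essentially the same route as the paper's: both decompose the count $c_k'(i,J)$ into products of $c_2'$ terms over consecutive ranks of the chain and apply the descending inequality to each factor, the paper via an explicit telescoping product after reducing to $s=1$ and $|J|=k-1$, you via induction on $k$ with the equivalent recursion. Your extra care in verifying that the complementary factor $c_{k-1}'(j, J\cap[j+1,n])$ is nonzero (via a maximal chain through an element of rank $j$) is exactly the point needed to make the strictness claim go through, which the paper leaves implicit.
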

\begin{proof}
  It is clearly enough to prove the statement for $s = 1$. Moreover, we may assume that $|J| = k-1$, as
  \[
    c_k'(i, J) = \sum_{\substack{J' \subseteq J \\ |J'| = k-1}} c_k'(i, J').
  \]
  Suppose that $J = \{i + s_1, \dotsc, i + s_{k-1}\}$ for some $0 < s_1 < \dotsc < s_{k-1}$. Letting $s_0 = 0$, we may write
  \[
    c_k'(i, J) = \prod_{j = 1}^{k-1} c_2'(i+s_{j-1}, i+s_j) \le \prod_{j = 1}^{k-1} c_2'(i+s_{j-1}-1, i+s_j-1) = c_k'(i-1, J-1),
  \]
  where the above inequality is strict if $P$ is strictly descending.
\end{proof}

\begin{proof}[Proof of Fact~\ref{fact:descending-unimodal}]
  Since $P$ is homogeneous and symmetric, then $|P_i| = |P_{n-i}|$ for each $i \in [0,n]$. We may thus assume that $j < i \le n/2$. Counting comparable pairs $(x,y) \in P_i \times P_j$ in two different ways, we obtain
  \begin{equation}
    \label{eq:Pi-vs-Pj}
    |P_i| \cdot c_2'(i, j) = c_2(\emptyset, \{i,j\}) = |P_j| \cdot c_2'(j, i).
  \end{equation}
  By Facts~\ref{fact:ckIJ-sym} and~\ref{fact:ckiJ-shift} with $s = n - i - j$,
  \[
    c_2'(i, j) = c_2'(n-i, n-j) \le c_2'(j, i),
  \]
  which together with~\eqref{eq:Pi-vs-Pj} implies that $|P_i| \ge |P_j|$ and that this inequality is strict if $P$ is strictly descending.
\end{proof}

Let $P$ be a homogenous and symmetric poset of height $n+1$. Centred sets of elements of $P$ admit a somewhat more explicit description (as compared to Definition~\ref{dfn:centred}) using orderings $\mu \colon [n+1] \to [0, n]$ of the ranks $P_0, \dotsc, P_n$ of $P$ as $P_{\mu(1)}, \dotsc, P_{\mu(n+1)}$ that are nondecreasing in the distance from the middle, that is, such that $|\mu(\ell) - n/2|$ is nondecreasing in $\ell$. One can easily see that there are as many as $2^{\lceil n/2 \rceil}$ such orderings. We shall distinguish two of them, denoted $\mum$ and $\mup$, which we now define. The sequence $\mum$ is the ordering of the elements of $[0, n]$ by their growing distance from the number $n/2-1/4$. Similarly, $\mup$ is the ordering of the elements of $[0, n]$ by their growing distance from $n/2+1/4$. In other words, for each $\ell \in [n+1]$,
\[
  \mum(\ell) =
  \begin{cases}
    \left\lfloor\frac{n+(-1)^\ell \ell}{2}\right\rfloor & \text{if $n$ is odd,} \\
    \left\lceil\frac{n+(-1)^\ell \ell}{2}\right\rceil & \text{if $n$ is even,}
  \end{cases}
  \qquad
  \text{and}
  \qquad
  \mup(\ell) = n - \mum(\ell).
\]
Observe that for each $\ell \in [n+1]$, the sets $\mum([\ell])$ and $\mup([\ell])$ are the only centred $\ell$-element subsets of $[0, n]$. We may now explicitly describe a nested sequence of centred subsets of $P$. To this end, let $N = |P|$ and let $\mu$ be any ordering of the ranks of $P$ that is nondecreasing in the distance from $n/2$, e.g., $\mu \in \{\mum, \mup\}$. We order the elements of $P$ as $x_1, \dotsc x_N$ by exhaustively listing the elements of levels $P_{\mu(1)}, \dotsc, P_{\mu(n+1)}$, in this order, one-by-one. Formally, for every $a \in [N]$, we let $\ell$ be the largest integer such that $a > a_{\ell-1} = |P_{\mu(1)}| + \dotsc + |P_{\mu(\ell-1)}|$ and let $x_a$ be an arbitrary element of $P_{\mu(\ell)}$ that is not yet among $x_1, \dotsc, x_{a-1}$. For each $a \in [N]$, let $X_a = \{x_1, \dotsc, x_a\}$ and let $m_k(a) = c_k(X_a)$. It is not very difficult to see that $m_k$ does not depend on the choice of $\mu$ or on the particular ordering of the $x_a$ within the rank levels. Moreover, for each $\ell \in [n+1]$,
\[
  P_{\mu(\ell)} = \{x_a \colon a_{\ell-1} < a \le a_\ell \}.
\]
Let us point out that not all centred subsets of $P$ may be described in this fashion, but nonetheless each $a$-element centred $A \subseteq P$ does satisfy $c_k(A) = c_k(X_a)$.

\begin{proof}[{Proof of Lemma~\ref{lemma:mk-convex}}]
  Suppose that $P$ is a homogeneous, symmetric, and descending finite poset of height $n+1$. Our aim is to show that for each $a \in [1,|P|-1]$, the inequality
  \begin{equation}
    \label{eq:mk-convex}
    \Delta m_k(a) \le \Delta m_k(a+1)
  \end{equation}
  holds and that~\eqref{eq:mk-convex} is strict whenever $a = a_\ell$ for some $\ell \ge k$. The left-hand and the right-hand sides of~\eqref{eq:mk-convex} are $c_k'(x_a, r(X_a))$ and $c_k'(x_{a+1}, r(X_{a+1}))$, respectively. Since
  \[
    c_k'\big(x_{a+1}, r(X_{a+1})\big) = c_k'\big(x_{a+1}, r(X_{a+1} \setminus \{x_{a+1}\})\big) = c_k'\big(x_{a+1}, r(X_a)\big),
  \]
  then Fact~\ref{fact:ckLI-hom} implies that~\eqref{eq:mk-convex} holds with equality unless $x_a$ and $x_{a+1}$ have different ranks or, equivalently, unless $a = a_\ell$ for some $\ell$. In particular, we have $X_a = P_{\mu(1)} \cup \dotsc \cup P_{\mu(\ell)}$ and $x_{a+1} \in P_{\mu(\ell+1)}$. We split the remainder of the proof into two cases, depending on whether or not $\mu(\ell+1)$ is farther away from $n/2$ than $\mu(\ell)$.

  \noindent
  \textbf{Case 1. $|\mu(\ell+1) - n/2| = |\mu(\ell) - n/2|$.} \\
  In this case, $\mu(\ell+1) = n - \mu(\ell)$ and $\mu([\ell+1]) = n - \mu([\ell+1])$. Therefore by Facts~\ref{fact:ckLI-hom} and~\ref{fact:ckIJ-sym},
  \[
    \begin{split}
      c_k'\big(x_{a+1}, r(X_{a+1})\big) & = c_k'\big(\mu(\ell+1), \mu([\ell+1])\big) = c_k'\big(\mu(\ell), \mu([\ell+1])\big) \\
      & = c_k'\big(\mu(\ell), \mu([\ell])\big) + c_k'\big(\{\mu(\ell), \mu(\ell+1)\}, \mu([\ell-1])\big) \\
      & = c_k'\big(x_a, r(X_a)\big) + c_k'\big(\{\mu(\ell), \mu(\ell+1)\}, \mu([\ell-1])\big).
    \end{split}
  \]
  Thus, $c_k'\big(x_{a+1}, r(X_{a+1})\big) \ge c_k'\big(x_a, r(X_a)\big)$ and the inequality is strict when $c_k'\big(\{\mu(\ell), \mu(\ell+1)\}, \mu([\ell-1])\big)$ is nonzero, which happens if an only if $\ell+1 \ge k$.

  \noindent
  \textbf{Case 2. $|\mu(\ell+1) - n/2| > |\mu(\ell) - n/2|$.} \\
  In this case, $\mu([\ell]) = n - \mu([\ell])$ and $\mu(\ell+1) \in \{\min\mu([\ell]) - 1, \max\mu([\ell])+1\}$. In particular, letting $i = \min\mu([\ell])$, by Facts~\ref{fact:ckLI-hom} and~\ref{fact:ckIJ-sym},
  \[
    c_k'\big(x_{a+1}, r(X_{a+1})\big) = c_k'\big(\mu(\ell+1), \mu([\ell+1])\big) = c_k'\big(\mu(\ell+1), \mu([\ell])\big) = c_k'\big(i-1, \mu([\ell])\big)
  \]
  and
  \[
    c_k'\big(x_a, r(X_a)\big) = c_k'\big(\mu(\ell), \mu([\ell])\big) = c_k'\big(i, \mu([\ell])\big).
  \]
  Thus, it suffices to verify that $c_k'\big(i, \mu([\ell])\big) \le c_k'\big(i-1, \mu([\ell])\big)$ and that the inequality is strict when $\ell \ge k-1$. To this end, let $x$ and $y$ be two elements of $P$ satisfying $x \prec y$ with $r(x) = i-1$ and $r(y) = i$ and observe that for each $I \subseteq [i,n]$, there is a canonical injection from $C_k'(y, I)$ to $C_k'(x, I)$, defined by replacing $y$ by $x$ in each $k$-chain. Note moreover, that the image of $C_k'(y, \mu[\ell])$ via this injection cannot contain any $k$-chain passing through both $x$ and $y$ and that there is at least one such chain when $\ell \ge k-1$.
\end{proof}

\begin{proof}[{Proof of Lemma~\ref{lemma:main}}]
  Fix an $I \subseteq [0,n]$, let $\bq = (q_0, \dotsc, q_n) \in \{0, 1\}^{n+1}$ be its characteristic vector, that is, $q_i = 1$ if $i \in I$ and $q_i = 0$ otherwise, and let 
  \[
    a = \sum_{i \in I} |P_i| = \sum_{i=0}^n q_i|P_i|.
  \]
  Define
  \[
    \cD = \left\{(p_0, \dotsc, p_n) \in [0,1]^{n+1} \colon p_i |P_i| \in \Nat \text{ for all $i$ and } \sum_{i=0}^n p_i |P_i| = a \right\}
  \]
  and note that $\bq \in \cD$. Given a $\bp = (p_0, \dotsc, p_n) \in \cD$, let $R$ be the random set of elements of $P$ formed by including each $x \in P$ with probability $p_{r(x)}$, independently of other elements. Observe that the expected size of $R$ is precisely $a$ and define
  \begin{equation}
    \label{eq:wk}
    w_k(\bp) = \Ex[c_k(R)] = \sum_{|J| = k} c_k'(\emptyset, J) \cdot \prod_{j \in J} p_j.
  \end{equation}

  \begin{claim}
    \label{claim:wk-minimum}
    The function $w_k \colon \cD \to \Reals$ achieves its minimum at a vector $\bp$ for which there exist $\ell \in [n+1]$ and $\mu \in \{\mum, \mup\}$ such that $p_{\mu(m)} = 1$ for all $m < \ell$ and $p_{\mu(m)} = 0$ for all $m > \ell$. Moreover, if $P$ is strictly descending and $|I| \ge k$, then $w_k(\bq) > w_k(\bp)$ unless $I = \mum([|I|])$ or  $I = \mup([|I|])$, that is, unless $I$ is centred.
  \end{claim}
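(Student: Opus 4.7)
The plan has two phases. Phase~1 uses multilinearity of $w_k$ to reduce to vectors $\bp \in \cD$ with at most one fractional coordinate; Phase~2 designs a local swap operator that pushes such a vector toward the centred form while weakly decreasing $w_k$.

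\textbf{Phase 1 (multilinearity).} The function $w_k$ in~\eqref{eq:wk} is multilinear in $\bp$ with nonnegative coefficients. For $\bp \in \cD$ with two fractional coordinates $p_i, p_j$, I consider the one-parameter family $\bp(t)$ obtained by setting $p_i(t) = p_i - t/|P_i|$ and $p_j(t) = p_j + t/|P_j|$ (other coordinates fixed), which preserves $\sum_m p_m|P_m|=a$. Then $w_k(\bp(t))$ is a concave quadratic in $t$ whose $t^2$-coefficient equals
\[
  -\frac{1}{|P_i||P_j|} \sum_{\substack{J \supseteq \{i, j\}\\ |J|=k}} c_k'(\emptyset, J) \prod_{m \in J \setminus \{i, j\}} p_m \le 0.
\]
Its minimum on the admissible interval is therefore attained at an endpoint, forcing one of $p_i, p_j$ into $\{0, 1\}$ without increasing $w_k$. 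Iterating yields a minimiser $\bp^*$ with at most one fractional coordinate.

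\textbf{Phase 2 (swap toward centre).} Suppose $\bp^*$ is not centred. Then there exist ranks $i, j \in [0, n]$ with $|j - n/2| < |i - n/2|$, $p_i^* > 0$, and $p_j^* < 1$. I choose $i$ to be the rank \emph{farthest} from $n/2$ among those with $p_i^* > 0$, and $j$ the rank \emph{closest} to $n/2$ among those with $p_j^* < 1$, and I perform the maximal admissible shift of probability mass from $P_i$ to $P_j$, producing $\bp^{**} \in \cD$. Expanding $\Delta w_k = w_k(\bp^{**}) - w_k(\bp^*)$ yields
\[
  \Delta w_k = \delta \sum_{\substack{K \subseteq [0,n] \setminus \{i, j\}\\ |K| = k-1}} \Bigl(\prod_{m \in K} p_m^*\Bigr) \bigl[c_k'(j, K) - c_k'(i, K)\bigr] + (\text{quadratic remainder}),
\]
where $\delta > 0$ is the shift amount; a direct check of the two boundary cases (``empty $P_i$'' and ``saturate $P_j$'') shows that the quadratic remainder is nonpositive for the maximal $\delta$. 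The argument then reduces to the key combinatorial inequality $c_k'(j, K) \le c_k'(i, K)$ for every $K$ of size $k-1$ contributing nonzero weight (i.e.\ $\prod_{m \in K} p_m^* > 0$). Because $i$ is the farthest support rank, each such $K$ consists entirely of ranks no farther from $n/2$ than $i$, and I plan to establish the inequality in this restricted regime by factoring each $c_k'$ via homogeneity into a telescoping product of $c_2'$ factors and applying Facts~\ref{fact:ckIJ-sym} and~\ref{fact:ckiJ-shift}. Checking moreover that Phase~2 preserves the ``at most one fractional coordinate'' property and strictly decreases the potential $\Phi(\bp) = \sum_m p_m|P_m| \cdot r^*(m)$, where $r^*(m)$ denotes the position of rank $m$ in the distance-to-$n/2$ ordering, the iteration terminates at a vector of the claimed centred form.

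\textbf{Strict case and main obstacle.} For the second assertion, the strictly descending hypothesis upgrades Fact~\ref{fact:ckiJ-shift} to a strict inequality when $|K| \ge k-1$, which strictifies the key inequality whenever the associated chain count is positive; the hypothesis $|I| \ge k$ secures such a positive count at the start of the iteration, so $w_k$ strictly decreases at some step, yielding $w_k(\bq) > w_k(\bp)$. The principal difficulty is verifying the key combinatorial inequality $c_k'(j, K) \le c_k'(i, K)$ when $K$ contains ranks strictly between $i$ and $j$: the factorisation of $c_k'$ into downward and upward chain counts splits differently on the two sides and does not admit a direct factor-by-factor comparison. My intended route is a peeling argument that slides such ``between'' ranks of $K$ outward one step at a time using Facts~\ref{fact:ckIJ-sym} and~\ref{fact:ckiJ-shift}, reducing to the simpler case in which $K$ lies entirely above or entirely below both $i$ and $j$ (handled by a direct comparison of the corresponding $c_2'$ factors).
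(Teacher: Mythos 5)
Your Phase~1 (multilinearity and concavity of $w_k$ along mass-preserving line segments) is sound, though the paper does not need this reduction. The fatal problem is Phase~2: the key combinatorial inequality $c_k'(j,K) \le c_k'(i,K)$ on which the whole swap rests is false as soon as $K$ contains a rank strictly between $i$ and $j$, and in fact your swap operator can strictly \emph{increase} $w_k$. Concretely, take $P$ to be the boolean lattice with $n=10$, $k=2$, and $I=\{2,3\}$. Your rules give $i=2$ (the support rank farthest from $n/2$) and $j=5$ (the rank closest to $n/2$ with $p_j<1$); for $K=\{3\}$ one has $c_2'(5,\{3\})=\binom{5}{3}=10 > 8 = \binom{8}{1} = c_2'(2,\{3\})$, so the linear term already has the wrong sign. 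Indeed, the maximal shift sends $(p_2,p_3,p_5)=(1,1,0)$ to $(0,1,\tfrac{45}{252})$ and changes $w_2$ from $\binom{10}{3}\cdot 3 = 360$ to $\binom{10}{5}\binom{5}{3}\cdot\tfrac{45}{252}=450$. So the monotonicity property fails outright; the ``peeling'' you propose for the between-ranks case cannot rescue the argument because the target inequality is itself wrong, and the strictness part collapses as well, since there you must apply the operator to $\bq$ directly.

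The paper sidesteps exactly this trap in two ways. First, mass is moved not to the rank nearest $n/2$ but to the \emph{nearest non-full} rank $i'$ above $i$ (after a symmetrisation step), so that every rank strictly between $i$ and $i'$ carries probability $1$; in the example above the paper moves the mass from rank $2$ to rank $4$, and $w_2$ drops to $180$. Second, and crucially, one does not compare $c_k'(\emptyset, J\cup\{i\})$ with $c_k'(\emptyset, J\cup\{i'\})$ for the \emph{same} $J$: one pairs $J$ with its image $\sigma(J)$ under the reflection of the interval $[i+1,i'-1]$. Because that interval is full, $\prod_{j\in J}p_j = \prod_{j\in\sigma(J)}p_j$, and the paired comparison $\delta\, c_k'(\emptyset, J\cup\{i\}) \ge \delta'\, c_k'(\emptyset,\sigma(J)\cup\{i'\})$ does factor into $c_2'$-terms controlled by Facts~\ref{fact:ckIJ-sym} and~\ref{fact:ckiJ-shift}. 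Without replacing your choice of target rank and introducing this reflection pairing (or an equivalent device), the proposal has a genuine gap.
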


  Let us first argue that Claim~\ref{claim:wk-minimum} implies the assertion of the lemma. To this end, let $\bp \in \cD$ be a vector minimising $w_k$ on $\cD$ of the form described in the claim. Since
  \[
    c_k\left( \bigcup_{i \in I} P_i \right) = w_k(\bq) \ge w_k(\bp),
  \]
  and the above inequality is strict unless $|I| < k$, $I = \mum([|I|])$, or $I = \mup([|I|])$, it is clearly enough to show that $w_k(\bp) = m_k(a)$. Let $M = p_{\mu(\ell)} |P_{\mu(\ell)}|$ and let $R'$ be the union of $\bigcup_{m=0}^{\ell-1} P_{\mu(m)}$ and the uniformly chosen random $M$-element subset of $P_{\mu(\ell)}$. Since every chain contains at most one element of each rank,
  \[
    \Ex[c_k(R')] = \Ex[c_k(R)] = w_k(\bp).
  \]
  On the other hand, it is not hard to see that $c_k(R')$ is constant. Indeed, since $R' \setminus P_{\mu(\ell)}$ is a union of full ranks, Fact~\ref{fact:ckLI-hom} implies that
  \begin{multline}
    c_k(R') = c_k\big(R' \setminus P_{\mu(\ell)}\big) + \sum_{x \in R' \cap P_{\mu(\ell)}} c_k'\big(x, r(R' \setminus P_{\mu(j)})\big) = c_k'\big(\emptyset, \mu([\ell-1])\big) + |R' \cap P_{\mu(\ell)}| \cdot c_k'\big(\mu(\ell), \mu([\ell-1])\big) \\
    = c_k'\big(\emptyset, \mu([\ell-1])\big) + M \cdot c_k'\big(\mu(\ell), \mu([0, \ell-1])\big).
  \end{multline}
  Finally, it is clear that $\Pr(R' = X_a) = 1/\binom{|P_{\mu(j)}|}{M} > 0$ and thus
  \[
    w_k(\bp) = \Ex[c_k(R')] = c_k(X_a) = m_k(a).
  \]

  We now prove the claim. To this end, we shall define a compression operator $\Phi \colon \cD \to \cD$ that, roughly speaking, pushes the mass of every distribution $\bp$ towards the middle rank levels. We shall then verify that $\Phi$ has the following three properties:
  \begin{enumerate}[label={(P\arabic*)}]
  \item
    \label{item:varphi-monotone}
    For every $\bp \in \cD$, we have $w_k(\Phi(\bp)) \le w_k(\bp)$.
  \item
    \label{item:varphi-fixpoint}
    For every $\bp \in \cD$, there is a $K$ such that $\Phi^{(K)}(\bp)$ has the form described in Claim~\ref{claim:wk-minimum}.
  \item
    \label{item:varphi-stability}
    If $|I| \ge k$ but $I \neq \mum([|I|])$ and $I \neq \mup([|I|])$, then $w_k(\Phi(\bq)) < w_k(\bq)$.
  \end{enumerate}
  It is not hard to see that the existence of such a $\Phi$ would establish the claim. Indeed, since $\cD$ is finite, $w_k$ achieves its minimum on some $\bp' \in \cD$. By property~\ref{item:varphi-fixpoint}, applying $\Phi$ to $\bp'$ some number of times produces a vector $\bp$ of the form described in the statement of the claim. By property~\ref{item:varphi-monotone}, we have $w_k(\bp) \le w_k(\bp') = \min w_k(\cD)$. Finally, property~\ref{item:varphi-stability} implies the last assertion of the claim. It thus suffices to construct such a compression operator $\Phi$.

  Fix some $\bp \in \cD$, let $\bpr = (p_n, \dotsc, p_0)$ and note that $\bpr \in \cD$, as $P$ is symmetric. Fact~\ref{fact:ckIJ-sym} implies that
  \begin{equation}
    \label{eq:wk-symmetry}
    w_k(\bpr) = \sum_{|J| = k} c_k'(\emptyset, J) \cdot \prod_{j \in J} p_{n-j} = \sum_{|J| = k} c_k'(\emptyset ,n-J) \cdot \prod_{j \in J} p_{n-j} = w_k(\bp).    
  \end{equation}
  Furthermore, $\bp$ has the form described in Claim~\ref{claim:wk-minimum} if and only if $\bpr$ has such form, with $\mup$ playing the role of $\mum$ and vice-versa. Let $i$ be the smallest index such that $\max\{p_i, p_{n-i}\} > 0$. By~\eqref{eq:wk-symmetry}, we may assume that $p_i > 0$, as otherwise we replace $\bp$ with $\bpr$. Now, let $i'$ be the smallest index greater than $i$ such that $p_{i'} \neq 1$. (If such $i'$ does not exist, then $\bp = (1, \dotsc, 1)$ and $\bp$ is the only element of $\cD$.) Clearly, $i+1 \le i' \le n-i+1$. If $i' = n-i+1$, then $\bp$ has the required form, with $\mu = \mup$ and $\ell = (\mup)^{-1}(i)$, and we let $\Phi(\bp) = \bp$. Thus, we may assume that $i' \le n-i$. If $i' = n-i$ and $p_{i'} = 0$, then $\bp$ has the required form, with $\mu = \mum$ and $\ell = (\mum)^{-1}(i)$, and we again let $\Phi(\bp) = \bp$. Thus, we may assume that either $i' < n-i$ or that $i' = n-i$ but $0 < p_{i'} < 1$. By~\eqref{eq:wk-symmetry}, we may assume that in the latter case $p_i \le p_{n-i}$, as otherwise we may replace $\bp$ with $\bpr$. (Note also for future reference that the latter case is impossible if $\bp = \bq$, as all coordinates of $\bq$ are either $0$ or $1$.) Let
  \[
    \delta = \min\left\{ p_i, (1-p_{i'}) \cdot \frac{|P_{i'}|}{|P_i|}\right\} \qquad \text{and} \qquad \delta' = \delta \cdot \frac{|P_i|}{|P_{i'}|}.
  \]
  Define $\Phi(\bp) = (p_0', \dotsc, p_n')$ by
  \[
    p_j' =
    \begin{cases}
      p_i - \delta & \text{if $j = i$}, \\
      p_{i'} + \delta' & \text{if $j = i'$}, \\
      p_j & \text{if $j \not\in \{i, i'\}$},
    \end{cases}
  \]
  and observe that $\Phi(\bp) \in \cD$. Informally speaking, $\Phi$ moves as much mass as possible from $p_i$ to $p_{i'}$ subject to $p_i$ and $p_{i'}$ remaining in the interval $[0,1]$; in particular, either $p_i' = 0$ or $p_{i'}' = 1$. It is not difficult to see that for large enough $K$, the vector $\Phi^{(K)}(\bp)$ has the special form described in the claim. Indeed, if $i' = n-i$, then already $\Phi(\bp)$ has this form. Otherwise, letting $h \colon \cD \to \Nat$ be the function defined by 
  \[
    h(\bp) = \sum_{i=0}^n |2i - n| \cdot p_i |P_i|,
  \]
  one sees that $h(\Phi(\bp)) < h(\bp)$.

  We claim that $w_k(\Phi(\bp)) \le w_k(\bp)$ and that if $P$ is strictly descending and $|I| \ge k$, then this inequality is strict. To this end, let
  \begin{align*}
    \cJ_1 & = \{J \subseteq [0,n] \colon |J| = k-1 \text{ and } i, i' \not\in J\}, \\
    \cJ_2 & = \{J \subseteq [0,n] \colon |J| = k-2 \text{ and } i, i' \not\in J\}
  \end{align*}
  and observe that
  \begin{multline}
    \label{eq:wk-change}
    w_k(\bp) - w_k(\Phi(\bp)) = \sum_{J \in \cJ_1} c_k'(\emptyset, J \cup \{i\}) \cdot \delta \prod_{j \in J} p_j - \sum_{J \in \cJ_1} c_k'(\emptyset, J \cup \{i'\}) \cdot \delta' \prod_{j \in J} p_j \\
    + \sum_{J \in \cJ_2} c_k'(\emptyset, J \cup \{i, i'\}) \cdot (p_ip_{i'} - (p_i-\delta)(p_{i'} + \delta')) \prod_{j \in J} p_j.
  \end{multline}
  We first claim that each summand in the third sum in~\eqref{eq:wk-change} is nonnegative. Clearly, it suffices to show that
  \begin{equation}
    \label{eq:pipip-change}
    p_ip_{i'} - (p_i-\delta)(p_{i'} + \delta') \ge 0.
  \end{equation}
  By our choice of $\delta$ and $\delta'$, either $p_i - \delta = 0$ or $p_{i'} + \delta' = 1$. In the former case, inequality~\eqref{eq:pipip-change} holds trivially. Assume that the latter case holds. If $i' = n-i$, then $\delta = \delta'$, as $|P_i| = |P_{i'}|$ by Fact~\ref{fact:descending-unimodal}, and $p_i \le p_{i'}$ by the assumption we made above. Thus,
  \[
    p_ip_{i'} - (p_i - \delta)(p_{i'} + \delta') = \delta(p_{i'} - p_i) + \delta^2 \ge 0.
  \]
  Finally, if $i' < n-i$, then $|P_{i'}| \ge |P_i|$ by Fact~\ref{fact:descending-unimodal}. In particular, $\delta = (1-p_{i'}) |P_{i'}| / |P_i| \ge 1-p_{i'}$ and consequently,
  \[
    p_ip_{i'} - (p_i - \delta)(p_{i'} + \delta') = p_ip_{i'} - p_i + \delta \ge p_ip_{i'} - p_i + 1 - p_{i'} = (1-p_i)(1-p_{i'}) \ge 0.
  \]
  We now move to the heart of the matter, which is comparing the first two sums.

  Let $\sigma \colon [0,n] \to [0,n]$ be the function that reverses the interval $[i+1,i'-1]$, that is, let
  \[
    \sigma(j) =
    \begin{cases}
      i+i'-j & \text{if $i < j < i'$},\\
      j & \text{otherwise}.
    \end{cases}
  \]
  It is easy to see that $\sigma$ induces an involution in $\cJ_1$. Indeed, $\sigma$ is an involution in $\cP([0,n])$ and $\sigma(J) \in \cJ_1$ for each $J \in \cJ_1$.

  \begin{claim}
    \label{claim:delta-ck-comp}
    For each $J \in \cJ_1$ with $J \subseteq [i+1, n-i-1]$,
    \begin{equation}
      \label{eq:delta-ck-comp}
      \delta \cdot c_k'(\emptyset, J \cup \{i\}) \ge \delta' \cdot c_k'(\emptyset, \sigma(J) \cup \{i'\}).
    \end{equation}
    Moreover, if $P$ is strictly descending and $i' < n-i$, then the above inequality is strict.
  \end{claim}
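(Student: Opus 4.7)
The plan is to reduce \eqref{eq:delta-ck-comp} to a factor-by-factor comparison of two-element chain counts. Using the homogeneity identity $c_k'(\emptyset, K) = |P_a| \cdot c_k'(a, K \setminus \{a\})$ (valid for any $a \in K$) together with $\delta |P_i| = \delta' |P_{i'}|$, the claim becomes equivalent to $c_k'(i, J) \ge c_k'(i', \sigma(J))$. Partition $J = J_- \sqcup J_+$ where $J_- = J \cap [i+1, i'-1]$ and $J_+ = J \cap [i'+1, n-i-1]$, and list $J = \{j_1 < \dotsb < j_{k-1}\}$ with $J_- = \{j_1, \dotsc, j_s\}$. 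Set $j_0 := i$ and $e := n - i - i' \ge 0$. The chain-product formula gives $c_k'(i, J) = \prod_{t=1}^{k-1} c_2'(j_{t-1}, j_t)$. For $c_k'(i', \sigma(J))$, decompose each chain at its rank-$i'$ element into a below-part (a product of down-counts $d_2$) and an above-part (a product of $c_2'$); the symmetry-based identity $d_2(a, b) = c_2'(n-b, n-a)$ then converts the below-part's $d_2$-factors into $c_2'$-factors shifted by $e$, giving
\[
c_k'(i', \sigma(J)) = \prod_{t=1}^{s} c_2'(e + j_{t-1}, e + j_t) \cdot c_2'(i', j_{s+1}) \cdot \prod_{t=s+2}^{k-1} c_2'(j_{t-1}, j_t).
\]

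After cancelling the common tail $\prod_{t=s+2}^{k-1} c_2'(j_{t-1}, j_t)$, the required inequality reduces to
\[
\prod_{t=1}^{s} c_2'(j_{t-1}, j_t) \cdot c_2'(j_s, j_{s+1}) \ge \prod_{t=1}^{s} c_2'(e + j_{t-1}, e + j_t) \cdot c_2'(i', j_{s+1}).
\]
For each $t \in \{1, \dotsc, s\}$, $e$-fold iteration of the descending property yields $c_2'(j_{t-1}, j_t) \ge c_2'(e + j_{t-1}, e + j_t)$, and the shift is legal since $e + j_t \le n - i - 1$. For the final pair, $c_2'(j_s, j_{s+1}) \ge c_2'(i', j_{s+1})$ follows by a containment argument: fix any rank-$i'$ element $y$ and any rank-$j_s$ element $x \prec y$ (which exists by gradedness as $j_s \le i' - 1$); every rank-$j_{s+1}$ element above $y$ is automatically above $x$, so by homogeneity the inequality holds. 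Multiplying these inequalities establishes \eqref{eq:delta-ck-comp}.

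For the strict assertion under strict descending and $i' < n - i$ (so $e \ge 1$): if $s \ge 1$, then each of the first $s$ descending-based inequalities is strict, so the overall comparison is strict. The hard part is the case $s = 0$, where the reduction leaves only $c_2'(i, j_1) > c_2'(i', j_1)$ to be established; the containment argument alone supplies merely the weak inequality, and promoting it to strict requires a separate refinement. The plan here is to combine the strict inequality $|P_{i'}| > |P_i|$ supplied by Fact~\ref{fact:descending-unimodal} with the double-counting identity $|P_a| c_2'(a, j_1) = |P_{j_1}| d_2(a, j_1)$ for $a \in \{i, i'\}$, and then to exploit strict descending together with the constraint $j_1 \le n - i - 1 < n$ to rule out the \emph{saturation} scenario in which every rank-$i'$ super-element of a fixed rank-$i$ element $x$ lies below every rank-$j_1$ super-element of $x$.
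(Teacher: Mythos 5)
Your proof of inequality \eqref{eq:delta-ck-comp} itself is correct and is essentially the paper's argument in unfolded form. The paper groups the factors below rank $i'$ into a single block, converts it with Fact~\ref{fact:ckIJ-sym}, and compares it with the block $c_{r+1}'\big(i, J \cap [i+1,i'-1]\big)$ via Fact~\ref{fact:ckiJ-shift}; you perform the same symmetry conversion and the same shift by $e = n-i-i'$ one $c_2'$-factor at a time, and your treatment of the junction factor, $c_2'(j_s, j_{s+1})$ versus $c_2'(i', j_{s+1})$ via the replacement injection, is identical to the paper's. Your strictness argument for $s \ge 1$ (each shifted factor drops strictly when $P$ is strictly descending and $e \ge 1$) also matches the paper.

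The gap is the strictness assertion in the case $s = 0$, i.e.\ $J \subseteq [i'+1, n-i-1]$, which you correctly isolate as the hard case but do not prove. There the claim reduces to $c_2'(i, j_1) > c_2'(i', j_1)$, and none of the tools used so far yields strictness: the shift acts on an empty block and the replacement injection only gives $\ge$. Your proposed plan does not close this. Double counting the comparable pairs between two rank levels gives
\[
  \frac{c_2'(i, j_1)}{c_2'(i', j_1)} \;=\; \frac{|P_{i'}|}{|P_i|} \cdot \frac{c_2'\big(\{j_1\}, \{i\}\big)}{c_2'\big(\{j_1\}, \{i'\}\big)},
\]
where the counts in the second ratio are down-degrees of a fixed rank-$j_1$ element; the first ratio indeed exceeds $1$ by Fact~\ref{fact:descending-unimodal}, but the second can be far below $1$ (in the boolean lattice it equals $\binom{j_1}{i}/\binom{j_1}{i'}$), so the two ingredients you name do not combine to give the desired strict inequality. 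Moreover, ``ruling out the saturation scenario'' is not an argument but a restatement of the goal: counting, inside the up-set of a fixed $x \in P_i$, the comparable pairs between its rank-$i'$ and rank-$j_1$ elements shows that $c_2'(i,j_1) = c_2'(i',j_1)$ holds \emph{if and only if} every rank-$i'$ element above $x$ lies below every rank-$j_1$ element above $x$. So you would need a genuine deduction of non-saturation from strict descendingness together with $j_1 \le n-i-1$, and no such deduction is supplied. As written, the ``moreover'' part of the claim is unproved when $J \cap [i+1,i'-1] = \emptyset$.
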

  
  We first argue that Claim~\ref{claim:delta-ck-comp} implies that the right-hand side of~\eqref{eq:wk-change} is nonnegative. Indeed, if there was a $j' \in J$ with $j' < i$ or $j' > n-i$, then $\prod_{j \in J} p_j = \prod_{j \in \sigma(J)} p_j = 0$ as $p_{j'} = p_{\sigma(j')}= 0$ by our choice of $i$. Crucially, since $p_j = 1$ for all $j \in [i+1, i'-1]$, then for each $J$ as in the claim,
  \[
    \prod_{j \in J} p_j = \prod_{j \in J \setminus [i+1, i'-1]} p_j = \prod_{j \in \sigma(J \setminus [i+1, i'-1])} p_j = \prod_{j \in \sigma(J)} p_j.
  \]
  Therefore, summing~\eqref{eq:delta-ck-comp} over all $J$ implies that the first line of the right-hand side of~\eqref{eq:wk-change} is nonnegative, and we have already shown above that the second line is also nonnegative. Moreover, if $P$ is strictly descending, then inequality~\eqref{eq:delta-ck-comp} is strict. Since if $|I| \ge k$ and $\bp = \bq$, then $i' \neq n-i$ and there is a $J \in \cJ_1$ such that $\prod_{j \in J} p_j > 0$ and consequently, the right-hand side of~\eqref{eq:wk-change} is positive.

  It suffices to prove Claim~\ref{claim:delta-ck-comp}. To this end, fix a $J \in \cJ_1$ and suppose that
  \[
    J \cap [i+1, i'-1] = \{i + s_1, \dotsc, i+s_r\}
  \]
  for some $r \ge 0$ and $0 < s_1 < \dotsc < s_r < i'-i$. Since $i < \min J$, then
  \[
    c_k'(\emptyset, J \cup \{i\}) = |P_i| \cdot c_k'(i, J) = |P_i| \cdot c_{r+1}'(i, \{i+s_1, \dotsc, i+s_r\}) \cdot c_{k-r}(i+s_r, J \cap [i'+1, n]).
  \]
  Similarly, as $\sigma(J) \cap [0, i'-1] = \sigma(\{i+s_1, \dotsc, i+s_r\}) = \{i'-s_1, \dotsc, i'-s_r\}$, then
  \[
    c_k'(\emptyset, \sigma(J) \cup \{i'\}) = |P_{i'}| \cdot c_{r+1}'(i', \sigma(J)) = |P_{i'}| \cdot c_{r+1}(i', \{i'-s_1, \dotsc, i'-s_r\}) \cdot c_{k-r}'(i', J \cap [i'+1, n]).
  \]
  By Fact~\ref{fact:ckIJ-sym},
  \[
    c_{r+1}(i', \dotsc, \{i'-s_1, \dotsc, i'-s_r\}) = c_{r+1}(n-i', \{n-i'+s_1, \dotsc, n-i'+s_r\}).
  \]
  Finally, note three facts. First, $\delta|P_i| = \delta'|P_{i'}|$. Second, since $i + s_r < i'$, then
  \[
    c_{k-r}'(i+s_r, J \cap [i'+1, n]) \ge c_{k-r}'(i', J \cap [i'+1, n]).
  \]
  Indeed, letting $x \in P_{i+s_r}$ and $y \in P_{i'}$ be arbitrary elements with $x \prec y$, there is a natural injection from $C_{k-r}'(y, J \cap [i'+1, n])$ to $C_{k-r}'(x, J \cap [i'+1, n])$, simply replacing $y$ with $x$. Finally, as $i \le n-i'$, then by Fact~\ref{fact:ckiJ-shift}, we have
  \[
    c_{r+1}'(i, \{i+s_1, \dotsc, i+s_r\}) \ge c_{r+1}'(n-i', \{n-i'+s_1, \dotsc, n-i'+s_r\}).
  \]
  Moreover, if $P$ is strictly descending and $i' < n-i$, then the above inequality is strict. This proves Claim~\ref{claim:delta-ck-comp}, thus completing the proof of Lemma~\ref{lemma:main}.
\end{proof}

\bibliography{poset-supsat}
\bibliographystyle{amsplain}

\end{document}